\documentclass[a4paper,preprint,12pt]{elsarticle} 
\usepackage[english]{babel} 
\usepackage[latin1]{inputenc}
\usepackage{amssymb}  
\usepackage{amsmath}    
\usepackage{pb-diagram}         
\usepackage{epstopdf}                   
\usepackage{graphicx}          
 \usepackage{epsfig}      
\usepackage{color} 
\usepackage{fancyhdr}   
 \usepackage{natbib}  
\usepackage{rotating}   
\usepackage[T1]{fontenc}        
\usepackage{url}                
\usepackage{mathtools}                 
\usepackage{vmargin}    
\usepackage{lineno}
\usepackage{setspace}
\usepackage{color,soul}
\usepackage[colorlinks=true,linkcolor= blue,urlcolor=blue,citecolor=blue]{hyperref} 
\usepackage[ruled,vlined]{algorithm2e}  
\usepackage[title]{appendix} 
 \setulcolor{blue} 
 \setstcolor{blue}   
 \sethlcolor{red}  
\biboptions{square}  
     
\newcommand{\bo}[1]{\mathbf{#1}} 

\def\indic{\hbox{1\kern-.24em\hbox{I}}}      

\newcommand{\cov}{\mathbb{C}_{ov}}   
\newcommand{\var}{\mathbb{V}}  
\newcommand{\esp}{\mathbb{E}}    
\newcommand{\trace}{\text{Tr}}

\newcommand{\x}{x}
\newcommand{\X}{X}

\newcommand{\Z}{Z} 
\newcommand{\R}{\mathbb{R}}

\newcommand{\MG}{G} 
 
\newcommand{\XE}{\mathbb{\underline{X}}}

\newcommand{\M}{f}

\newcommand{\T}{T}    
      
\newcommand{\NN}{n}     
\newcommand{\Select}{\mathbf{s}}  

\newcommand{\norme}[1]{\left|\left| #1 \right|\right|_{L^2}}

{\bf}{\it} 
{\bf}{\it}  

\newtheorem{theorem}{Theorem}{\bf}{\it}     
\newtheorem{lemma}{Lemma}{\bf}{\it}          
{\bf}{\it} 
\newtheorem{corollary}{Corollary}{\bf}{\it} 
\sethlcolor{green}        
  
\catcode`\"=\active 
\catcode`\"=\active 
\def"{\og\ignorespaces}
\def"{{\fg}} 
 
\makeatletter
\def\ps@pprintTitle{%
  \let\@oddhead\@empty
  \let\@evenhead\@empty
  \def\@oddfoot{\reset@font\hfil\thepage\hfil}
  \let\@evenfoot\@oddfoot
}
\makeatother     
   
\begin{document}

\begin{frontmatter}         
\title{Sharp inequalities between variance-based dependent sensitivity indices and Shapley effects: upper-bounds} 
\author[a,b]{Matieyendou Lamboni\footnote{Corresponding author: matieyendou.lamboni[at]univ-guyane.fr/gmail.com,  May 27, 2026}}         
\address[a]{University of Guyane, Department DFR-ST, French Guiana, France} 
\address[b]{228-UMR Espace-Dev, University of Guyane, University of R\'eunion, IRD, University of Montpellier, France.}              
                                              
\begin{abstract}   
For models evaluated at a random set of independent variables, the variance-based Shapley effects range between  Sobol' indices, and the corresponding total indices admit derivative-based upper-bounds. Such  relationships fail when the inputs are non-independent. This study investigates a general inequality link between the variance-based dependent sensitivity indices, recently introduced by us, and the variance-based Shapley  effects for models with dependent inpus. It turns out that Shapley effects range  between the main and total dependent sensitivity indices, and such indices are computationally more attractive. Moreover, different upper-bounds of such indices are provided so as to ease the identification of non-relevant inputs in higher dimensions as well as to obtain  sometimes practical estimates of total dependent indices. Some of such bounds rely on the traditional gradients, while others rely on generalized sensitivity indices using dependency models.  
\end{abstract}

\begin{keyword}         
 Dependency models  \sep Dependent sensitivity indices \sep Non-independent variables \sep  Poincar\'e-type inequalities \sep Shapley effects \\        
\textbf{AMS:} 62J10, 49Q12, 	62Fxx.      
\end{keyword}  
               
\end{frontmatter}         
\setpagewiselinenumbers            
\modulolinenumbers[1]   

  
\section{Introduction}  
Traditional ANOVA decompositions (\cite{hoeffding48a,scheffe59,efron81,antoniadis84,sobol93,rabitz99,kuo10,gelman05}) and derivative-based ANOVA (\cite{lamboni22,lamboni25stats}) are interesting frameworks in  exploratory and confirmatory models analysis. The corresponding decompositions are unique when all the input variables are independent, leading to unique definitions of variance-based sensitivity indices that enjoy nice properties (\cite{saltelli00}). To list some of them, consider a real-valued model $\M : \R^d \to \R$ with $d >0$, which is evaluated at $d$ independent variables, that is, $\bo{\X} := (X_j, \, \forall\, j \in \{1, \ldots, d\})$.  
For every input $\X_j$, the main Sobol' index (i.e., $S_j$), the total index ($S_{T_j}$) and the variance-based (Vb) Shapley effect of  $\X_j$  (i.e., $Sh_j$ from \cite{owen14b,owen17,iooss19}) verify the relationship     
$
0\leq S_j \leq Sh_j \leq S_{T_j} \leq 1                                                        
$,      
showing how Sobol' indcies bracket the Vb-Shapley effects (\cite{owen17}).\\ 
 
Such inequalities are supplemented by different upper-bounds of the total indices thanks to Poincar\'e-type inequalities. While different upper-bounds have been provided in \cite{kucherenko09,roustant14,kucherenko16,roustant17,lamboni22}, the sharp upper-bound (i.e., $ U\!B_j$) of the total index of $\X_j$ that holds universally across all of the distribution functions is provided in \cite{lamboni22} (Theorem 3), leading to 
$  
0\leq S_j \leq Sh_j \leq S_{T_j} \leq U\!B_j  
$. The equality $S_{T_j} = U\!B_j$ holds for specific functions constructed in \cite{lamboni22}. Since (i) $S_j,\, U\!B_j$ are computationally more attractive than $S_{T_j}$ (\cite{kucherenko09,lamboni22,ghanem17}), and (ii) $S_{T_j}$ is computationally more attractive than $Sh_{j}$ (\cite{owen14b}), the above inequalities enable sometimes good approximations of the values of $S_{T_j}$s and $Sh_j$s using fewer model runs. It is to be noted that randomized computations of $Sh_j$s proposed in \cite{goda21} require the same model runs compared to direct computations of $S_{T_j}$s.
 Generally, $U\!B_j$s serve as a screening measure related to Sobol' indoces. Therefore, establishing such types of inequalities for models with non-independent inputs is worth investigating and is the key motivation of this paper.\\    
        
For functions evaluated at non-independent input variables $\bo{\X}$, different variance-based global sensitivity indices have been proposed in (i) \cite{hooker07,kucherenko12,rabitz12,chastaing15,kucherenko17} using $L_2$ projections and the orthogonalization procedure; (ii) \cite{mara12,mara15,tarantola17,lamboni21,lamboni25math} using probabilistic transformations of dependent or correlated inputs, such as   
 the inverse Rosenblatt (\cite{obrien75}) and Nataf transforms or dependency models (\cite{skorohod76,lamboni21,lamboni22}). Note that dependency models (DMs) allow for extracting the dependency structures of non-independent variables under the statistical and probabilistic framework.   
While the first-type aforementioned approaches can sometimes gives inconsistent effects of the input variables, one always obtains consistent main and total Vb-sensitivity indices using the second-type approaches. Basically, the latter approaches consist in applying Sobol' indices on equivalent representations (ERs) of the model outputs, which includes only independent inputs. Consequently, the extended Sobol' indices of the input $\X_j$ (i.e., $eS_j$ and $eS_{T_j}$) verify the inequalities $0\leq eS_j \leq eS_{T_j} \leq 1$. Moreover, ERs resulting directly from dependency models enable the formulation of the derivative-based upper-bounds of $eS_{T_j}$s, ensuring    
$$     
0 < eS_j \leq eS_{T_j} \leq  eU\!B_j \, ,                  
$$  
with $eU\!B_j$ being the shparp upper-bound of $eS_{T_j}$ (\cite{lamboni26mcs}), likewise $U\!B_j$ is that of $S_{T_j}$. \\  
                      
Establishing a general link between the extended Sobol' indices and Shapley effects in the presence of non-independent variables has been investigated in \cite{iooss19}. According to that work, there is no universal link between both types of inputs' effects, as $Sh_j$ can be less or greater than $eS_j$ and $eS_{T_j}$ across different functions. Therefore, a formal inequality link between Shapley effects and  extended Sobol' indices (\cite{mara15,tarantola17,lamboni21,lamboni25stats}) might be difficult even impossible to obtain, because it can happen that the sum of $eS_j$s is greater than one. \\
    
The Vb-dependent sensitivity indices (i.e., DSIs), recently developed in \cite{lamboni26uq}, consist in applying probabilistic transforms across all possible conditional inputs of the form $\X_j |\bo{\X}_u$ for all $u \subseteq \{1, \ldots, d\}$, with the resulting sensitivity measure averaging over the effects of such conditional inputs in a symmetric way. Precisely, the resulting transforms a.k.a. dependency models of $\bo{X}$ and the corresponding ERs of $\M(\bo{X})$ induce a new multivariate model, and DSIs follow directly by applying the first-type generalized sensitivity indices (\cite{lamboni11,gamboa14}). Such indices 
are promising for establishing a formal inequality link with Shapley effects, as the corresponding main indices and interactions sum to one. Moreover, DSIs can also be expressed in ways that look like Shapley effects, but they differ from the Shapley effects, except for linear models evaluated at the Gaussian random vectors having a full rank covariance matrix.  \\          
                          
In this paper, one investigates a sharp inequality link between DSIs provided in \cite{lamboni26uq} and the Vb-Shapley effects for single-response models evaluated at non-independent variables. Additionally, one provides upper-bounds of total DSIs as well as the Vb-Shapley effects using the traditional gradients of models or generalized sensitivity indices of dependency models. The rest of the paper is organized as follows: Section \ref{sec:pre} provides dependency models of non-independent variables and expressions of DSIs. Such expressions are well-suited for (i) establishing the aforementioned links (see Sections \ref{sec:link1}-\ref{sec:link2}), and (ii) deriving different upper-bounds of the total DSIs in Section \ref{sec:up}. Illustrations of the proposed approach are provided in Section \ref{sec:test}, and Section \ref{sec:con} concludes this work.

\section*{General notation}                         
Given $u \subseteq \{1,\ldots, d\}$ with the cardinality $|u|>0$, we use $\bo{\X}_u :=(\X_j,\, \forall\, j \in u)$ and $\bo{\X}_{\sim u} :=(\X_j,\, \forall\,  j\in \{1,\ldots, d\}\setminus u)$ for two subsets of $\bo{\X}$. We also use $Y_1 \stackrel{d}{=} Y_2$ to say that $Y_1$ and $Y_2$ have the same distribution function (DF). 
The input $\X_j$ has $F_j$ as the DF ($\X_j \sim F_j$), and continuous inputs $\X_k$s have $\rho_k$s as probability density functions.   

A set of $d$ non-independent variables $\bo{\X}$ can always be organized as a tuple of $K$ sets of random variables $\bo{\X}_{\pi_1}, \ldots, \bo{\X}_{\pi_K} $, which are mutually independent, where the sets $\pi_1, \ldots,\pi_K$ form a partition of $\{1, \ldots, d\}$. Without loss of generality, we use $\bo{\X}_{\pi_1}$ for a random set of $d_1\geq 0$ independent variable(s); $\bo{\X}_{\pi_k}$ for one set of $d_k\geq 2$ dependent variables with $k =2, \ldots, K$. Evaluating $\M$ at $\bo{\X}$ yields $\M(\bo{\X})$ as the random output. \\  
Given a matrix $\mathcal{A} \in \R^{\NN\times \NN}$, $\trace(\mathcal{A})$ stands for the trace of $\mathcal{A}$. Also, $\esp[\cdot]$ (resp. $\var[\cdot]$) stands for the expectation (resp. variance-covariance) operator.

\section{Generic formulation of dependent sensitivity indices} \label{sec:pre} 
\subsection{Equivalent representations of outputs and symmetric models} \label{sec:sym} 
Let $\bo{w}_k := (w_{1,k}, \ldots, w_{d_k, k})$ be an arbitrary permutation of $\pi_k$ with $k=2, \ldots K$.  
For a given $w_{1,k}$, we use $\bo{\X}_{\sim w_{1,k}} :=\left(\X_{w_2, k}, \ldots, \X_{w_{d_k}, k} \right)$ for a $d_k-1$-dimensional random vector. The permutation $\bo{w}_k$ leads to a DM of $\bo{\X}_{\pi_k}$ (\cite{skorohod76,lamboni21,lamboni22b,lamboni24sank,lamboni25math}), that is,                
\begin{equation} \label{eq:mdepk}           
              \bo{\X}_{\sim w_{1,k}} \stackrel{d}{=} r_{w_{1,k}}\left(\X_{w_{1,k}}, \bo{Z}_{\sim w_{1,k}} \right), \qquad   k =2, \ldots, K \,   ;            
\end{equation}        
where $\bo{Z}_{\sim w_{1,k}} :=\left(Z_{w_2, k}, \ldots, Z_{w_{d_k}, k} \right)$ is a vector of independent variables, and it is independent of $\X_{w_{1,k}}$ as well. The innovation variables $\bo{Z}_{\sim w_{1,k}}$ contain the remaining information about $\bo{\X}_{\sim w_{1,k}}$ once $\X_{w_{1,k}}$ is known. Composing $\M(\bo{\X})$ by (\ref{eq:mdepk}) yields       
\begin{equation} \label{eq:indeg}    
 \M\left(\bo{\X}_{\pi_1}, \X_{w_{1,2}}, 
r_{w_{1,2}}\left(\X_{w_{1,2}}, \bo{Z}_{\sim w_{1,2}} \right), \ldots, 
\X_{w_{1, K}}, r_{w_{1,K}}\left(\X_{w_{1,K}}, \bo{Z}_{\sim w_{1,K}} \right)  \right) =: g\left(\bo{\X}_{\pi_1},  \bo{\X}_{\Select}, \bo{Z} \right) \, ,     
\end{equation}        
with  $\Select := \{w_{1,2}, \ldots, w_{1,K} \}$        
$
\bo{\X}_{\Select} := \left(\X_{\ell},  \forall\, \ell \in \Select \right)
$ and 
$
\bo{Z} := \left(\bo{Z}_{\sim w_{1,k}}, \forall\, k \in \{2, \ldots, K\} \right) 
$.    
The function $g$ includes only independent variables, and we are able to recover the distributions of the output $\M(\bo{\X})$ conditional on subsets of inputs (i.e., $\M(\bo{\X}) | \bo{\X}_{u}$)  using $g$ and its inputs (see \cite{lamboni25math}). Moreover, the meaning of innovation variables w.r.t. the original output $\M (\bo{\X})$ has been examined in \cite{lamboni26uq}. To recall such a result in Lemma \ref{lem:condeff}, consider $j_k \in \pi_k$ with $k \in \{2, \ldots, K\}$, and note that $Z_{j_k}$ is the innovation variable that represents $\X_{j_k}$ in a DM when $\X_{j_k}$ is not at the first position. Given a permutation $\bo{w}_{k}$ of $\pi_k$, denote with $p_{j_k}$ the position of 
$j_k$ in that permutation, that is, $w_{p_{j_k}, k} =j_k$ and 
$\bo{w}_{k} = (w_{1,k}, \ldots, w_{p_{j_k}-1,k}, j_k, \ldots, w_{d_k,k})$. Using $u_k=\{w_{1,k}, \ldots, w_{p_{j_k}-1,k}\}$ and $\bo{w}_{\sim p_{j_k}} := (w_{p_{j_k}+1,k}, \ldots, w_{d_k,k})$ leads to $u_k \subseteq \pi_{k} \setminus \{j_k\}$.                         
\begin{lemma} (\cite{lamboni26uq}) \label{lem:condeff}                
Let $\bo{\X}_{\sim \pi_{k}} := \left(\bo{\X}_{\pi_1}, \bo{\X}_{\pi_2},\ldots, \bo{\X}_{\pi_{k-1}}, \bo{\X}_{\pi_{k+1}}, \ldots, \bo{\X}_{\pi_{K}}  \right)$ and  consider the ER 
$g_{u_k}\left( \bo{\X}_{u_{k}},\, Z_{j_k},\, \bo{Z}_{\bo{w}_{\sim p_{j_k}}}, \, \bo{\X}_{\sim \pi_{k}} \right) :=\M\left(\bo{\X}_{u_{k}}, r_{u_k}\left(\bo{\X}_{u_{k}},\, Z_{j_k}, \bo{Z}_{\bo{w}_{\sim p_{j_k}}}\right), \, \bo{\X}_{\sim \pi_{k}}  \right)$. Then,                  
\begin{equation}        
  \M(\bo{\X}) \, | \left( \X_{j_k} | \bo{\X}_{u_{k}}=\bo{x}_{u_{k}} \right) \stackrel{d}{=} g_{u_k}\left( \bo{x}_{u_{k}},\, Z_{j_k},\, \bo{Z}_{\bo{w}_{\sim p_{j_k}}}, \, \bo{\X}_{\sim \pi_{k}} \right) \, | Z_{j_k} \, .  \nonumber       
\end{equation}      
\end{lemma}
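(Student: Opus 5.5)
The plan is to exploit the structure of the dependency model attached to the ordered permutation $\bo{w}_k = (u_k, j_k, \bo{w}_{\sim p_{j_k}})$, written as a chain of conditional (Rosenblatt-type) transforms. Independence of the blocks $\bo{\X}_{\pi_1},\ldots,\bo{\X}_{\pi_K}$ means $\bo{\X}_{\sim \pi_k}$ can be carried along unchanged. It is independent of everything built from $\bo{\X}_{\pi_k}$, so it suffices to describe the conditional law of $\bo{\X}_{\pi_k}$ given $\bo{\X}_{u_k}=\bo{x}_{u_k}$ and then append $\bo{\X}_{\sim \pi_k}$ as an independent component.

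First, I would write the DM of $\bo{\X}_{\pi_k}$ generated by $\bo{w}_k$ (as in (\ref{eq:mdepk}), but with the block $u_k$ at the front). Concretely, I would use the distributional identity
\[
\left(\X_{j_k}, \bo{\X}_{\bo{w}_{\sim p_{j_k}}}\right) \stackrel{d}{=} r_{u_k}\left(\bo{\X}_{u_k}, Z_{j_k}, \bo{Z}_{\bo{w}_{\sim p_{j_k}}}\right),
\]
where $(Z_{j_k}, \bo{Z}_{\bo{w}_{\sim p_{j_k}}})$ are mutually independent and independent of $\bo{\X}_{u_k}$. The map $r_{u_k}$ is triangular:
\begin{itemize}
\item its first component $\X_{j_k} = r^{(1)}(\bo{\X}_{u_k}, Z_{j_k})$ is the conditional quantile transform of $\X_{j_k}|\bo{\X}_{u_k}$;
\item its later components depend additionally on $\X_{j_k}$ and on the subsequent innovations.
\end{itemize}
Because the innovations are independent of $\bo{\X}_{u_k}$, fixing $\bo{\X}_{u_k}=\bo{x}_{u_k}$ gives the identity
\[
\left(\X_{j_k}, \bo{\X}_{\bo{w}_{\sim p_{j_k}}}\right)\big|\,\bo{\X}_{u_k}=\bo{x}_{u_k} \stackrel{d}{=} r_{u_k}\left(\bo{x}_{u_k}, Z_{j_k}, \bo{Z}_{\bo{w}_{\sim p_{j_k}}}\right).
\]
This conditional distributional equality is the standard output of the Rosenblatt/Skorohod construction, which I take as given from the cited DM literature.

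Second, I would interpret the object on the left of the lemma, $\M(\bo{\X})\,|\,(\X_{j_k}|\bo{\X}_{u_k}=\bo{x}_{u_k})$: it is the output viewed as a random function of the conditional input $\X_{j_k}|\bo{x}_{u_k}$, conditioned on its value. Under the triangular representation, $\X_{j_k}=r^{(1)}(\bo{x}_{u_k},Z_{j_k})$ with $r^{(1)}$ measurable in $Z_{j_k}$, and monotone when the conditional DF is continuous. Hence conditioning on $\X_{j_k}$ (given $\bo{x}_{u_k}$) generates the same $\sigma$-field as conditioning on $Z_{j_k}$. Moreover, $(\bo{Z}_{\bo{w}_{\sim p_{j_k}}}, \bo{\X}_{\sim \pi_k})$ is independent of $Z_{j_k}$. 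It follows that the conditional law of $\bo{\X}_{\bo{w}_{\sim p_{j_k}}}$ given $(\bo{x}_{u_k}, \X_{j_k})$ matches that of the last components of $r_{u_k}(\bo{x}_{u_k}, Z_{j_k}, \bo{Z}_{\bo{w}_{\sim p_{j_k}}})$ given $Z_{j_k}$.

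Third, I would compose with $\M$. The joint conditional law of $(\bo{\X}_{u_k}, \X_{j_k}, \bo{\X}_{\bo{w}_{\sim p_{j_k}}}, \bo{\X}_{\sim\pi_k})$ is matched by $(\bo{x}_{u_k}, r_{u_k}(\cdot), \bo{\X}_{\sim \pi_k})$ conditionally on $Z_{j_k}$. Pushing forward through the measurable map $\M$ preserves equality in (conditional) distribution, which yields exactly the stated identity with $g_{u_k}$.

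I expect the main obstacle to be the second step: making rigorous the identification of conditioning on $\X_{j_k}$ with conditioning on $Z_{j_k}$. When the conditional DF of $\X_{j_k}|\bo{x}_{u_k}$ has atoms or flat parts, $r^{(1)}$ is not injective. I would first handle continuous, strictly increasing conditional DFs, where $r^{(1)}(\bo{x}_{u_k},\cdot)$ is a bijection. For the general case I would use a randomized probability integral transform, taking $Z_{j_k}$ uniform with $\X_{j_k}=F^{-1}_{j_k|u_k}(Z_{j_k})$. The conclusion should then hold for the regular conditional distributions almost surely, since the extra randomness in $Z_{j_k}$ beyond $\X_{j_k}$ does not enter $\M(\bo{\X})$ except through $\X_{j_k}$ and the later components built from it.
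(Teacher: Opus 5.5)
The paper gives no proof of this lemma; it is quoted from \cite{lamboni26uq}, so there is no in-paper argument to compare yours with. Judged on its own, your argument is correct and is the natural one. You take a triangular (Rosenblatt-type) dependency model for the ordering $(u_k, j_k, \mathbf{w}_{\sim p_{j_k}})$ and use independence of the innovations from $\mathbf{X}_{u_k}$ and from each other. You then carry the independent blocks $\mathbf{X}_{\sim\pi_k}$ along and push forward through $f$. This gives the identity for almost every $\mathbf{x}_{u_k}$. It is also the mechanism behind the identities from \cite{lamboni25math} that the paper relies on in Sections~\ref{sec:link1}--\ref{sec:link2}.

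Four points need tightening.

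(i) Your first displayed identity only concerns the marginal law of $(X_{j_k}, \mathbf{X}_{\mathbf{w}_{\sim p_{j_k}}})$, and by itself it does not give the conditional statement: a map ignoring its first argument could satisfy it. What you need, and what the DM construction actually provides, is the joint identity $(\mathbf{X}_{u_k}, X_{j_k}, \mathbf{X}_{\mathbf{w}_{\sim p_{j_k}}}) \stackrel{d}{=} (\mathbf{X}_{u_k}, r_{u_k}(\mathbf{X}_{u_k}, Z_{j_k}, \mathbf{Z}_{\mathbf{w}_{\sim p_{j_k}}}))$.

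(ii) The hypothesis doing the real work is triangularity. The coordinate of $r_{u_k}$ giving $X_{j_k}$ must depend only on $(\mathbf{x}_{u_k}, Z_{j_k})$, and later coordinates must depend on $Z_{j_k}$ only through $X_{j_k}$. The generic form (\ref{eq:mdepk}) does not display this, so state it as an explicit assumption on the DM.

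(iii) Given triangularity, the non-injective case needs no randomized probability integral transform. Write $X_{j_k} = h(\mathbf{x}_{u_k}, Z_{j_k})$ and the later coordinates as $s(\mathbf{x}_{u_k}, X_{j_k}, \mathbf{Z}_{\mathbf{w}_{\sim p_{j_k}}})$. Let $\kappa(x)$ be the law of $f(\mathbf{x}_{u_k}, x, s(\mathbf{x}_{u_k}, x, \mathbf{Z}_{\mathbf{w}_{\sim p_{j_k}}}), \mathbf{X}_{\sim\pi_k})$. Then:
\begin{itemize}
\item the conditional law of $g_{u_k}$ given $Z_{j_k}=z$ is $\kappa(h(\mathbf{x}_{u_k}, z))$;
\item the conditional law of $f(\mathbf{X})$ given $\mathbf{X}_{u_k}=\mathbf{x}_{u_k}$, $X_{j_k}=x$ is $\kappa(x)$ for almost every $x$;
\item since $h(\mathbf{x}_{u_k}, Z_{j_k}) \stackrel{d}{=} X_{j_k}\,|\,\mathbf{X}_{u_k}=\mathbf{x}_{u_k}$, the two random conditional laws agree in distribution.
\end{itemize}
That is all the lemma asserts, and all that $\sigma^{fo}_{u_k,j_k}$ uses. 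Identifying the $\sigma$-fields generated by $X_{j_k}$ and $Z_{j_k}$ is only needed for a stronger almost-sure reading.

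(iv) Flat parts of the conditional DF do not destroy injectivity of the quantile map; they produce jumps of $F^{-1}$. Only atoms make it non-injective.
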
        
     
It turns out that the Vb-main-effect of the innovation variable $Z_{j_k}$ associated with $g_{u_k}(\cdot)$ is equal to the effect of the conditional input $\X_{j_k} | \bo{\X}_{u_{k}}= \bo{x}_{u_{k}}$ using $\M(\bo{\X})$. Such effects may change according to the position $p_{j_k}$ and the set $u_k$.  
For a given position $p_{j_k} \in \{1, \ldots, d_k\}$ of $\X_{j_k}$, we have $\binom{d_k-1}{p_{j_k}-1}$ different effects of  $Z_{j_k}$, corresponding to the selection of the set $u_k$ out of $d_k-1$ elements.  Among the $d_k$ possible positions of $\X_{j_k}$ in DMs, the position given by $\left[\frac{d_k -1}{2} \right] +1$ is going to bring the maximum number of different effects of $Z_{j_k}$, with $[a]$ being the largest integer that is less than the real $a$. To be able to assess each different effect of every input of $\bo{\X}_{\pi_k}$ we need at least   
$  
R_{0,k} :=d_k \binom{d_k-1}{\left[\frac{d_k -1}{2} \right]}                   
$ 
 ERs because two different inputs cannot simultaneously be at the same position in a given DM. Different effects of every input of $\bo{\X}_{\pi_k}$ related to the position $p_k \in \{1, \ldots, d_k\}$ are repeated
$  
 d_k^{-1} R_{0,k} \, \binom{d_k-1}{p_k-1}^{-1}    
$
times (\cite{lamboni26uq}).\\              
For all of the $d$ inputs, the unique, minimal and fairness or symmetric model that accounts for the aforementioned different effects of every input requires the number of ERs  given by
 (\cite{lamboni26uq}): 
$$
R_{\min}^s  := lcm \left(d_k \binom{d_k-1}{p_k-1}, \quad  p_k=1, \ldots, d_k, \,  k=2, \ldots, K  \right) \, ,  
$$       
with $lcm$ being the least common multiple function. Thus, the number of repeated effects for each position $p_k \in \{1, \ldots, d_k\}$ of an input is given by           
\begin{equation}     \label{eq:replam}
\lambda_{p_k,k} := d_k^{-1} R_{\min}^s \, \binom{d_k-1}{p_k-1}^{-1}; \quad k=2, \ldots, K \, .   
\end{equation}                 
Denote with $\MG(\cdot)$ the symmetric multivariate model having  $R_{\min}^s$ ERs as outputs.

\subsection{Dependent sensitivity indices}\label{sec:DMSA}
Applying the first-type generalized sensitivity (\cite{lamboni21,gamboa14}) to the model $\MG(\cdot)$ yields the DSIs of non-independent variables. Consequently, the main DSIs and interactions sum to one (see \cite{lamboni26uq}). 
 Keeping in mind Lemma \ref{lem:condeff} and knowing that the first-type GSIs are defined via the trace function, DSIs are reformulated below by using $\Sigma$ for the variance of $\M(\bo{\X})$.\\   

Firstly, note that each $\X_{j_1}$ of $\bo{\X}_{\pi_1}$ (i.e., $j_1  \in \pi_1$) has only one effect, which is repeated $\lambda_{1,1} =R_{\min}^s$ times.  Thus, the main and total DSIs of $\X_{j_1}$ are    
\begin{equation}  
DS_{j_1}  := \frac{\var\left\{ \M_{j_1}^{fo}(\X_{j_1}) \right\}}{\Sigma}\, ;
\qquad \quad   
DS_{T_{j_1}} := \frac{\var\left\{ \M_{j_1}^{tot}(\bo{\X}) \right\}}{\Sigma} \, ,       
\end{equation}     
where   
$
\M_{j_1}^{fo}(\X_{j_1}) := \esp\left[\M\left(\bo{\X}\right) | \X_{j_1} \right] - \esp\left[\M\left(\bo{\X}\right) \right]
$
and 
$
\M_{j_1}^{tot}(\bo{\X}) := \M\left(\bo{\X}\right) - \esp_{\X_{j_1}}\left[\M\left(\bo{\X}\right) \right] 
$ 
are rhe first-order and total sensitivity functionals (SFs) of $\X_ {j_1}$, and $\esp_{\X_{j_1}}$ stands for the expectation taken w.r.t. $\X_{j_1}$. Obviously, we have the inequality (\cite{lamboni22}) 
$$
DS_{j_1} \leq DS_{T_{j_1}} \leq  
  D\!U\!B_{j_1} := \frac{1}{2 \var\left[\M(\bo{\X}) \right]} \esp\left[\left(\frac{\partial\M}{\partial x_{j_1}} (\bo{\X}) \right)^2 \frac{ F_{j_1}(\X_{j_1}) \left(1- F_{j_1}(\X_{j_1})\right)}{\left[\rho_{j_1}(\X_{j_1})\right]^2} \right] \, .    
$$     
         
Secondly, for $k> 1$ and for any $j_k \in \pi_k$, the  permutation $\bo{w}_{k} = (u_k, w_{p_{j_k}}=j_k, \bo{w}_{\sim p_{j_k}})$ leads to the following ER:   
$$  
g_{u_k,j_k}\left(\X_{w_{1,k}}, \bo{Z}_{\sim w_{1,k}} , \bo{\X}_{\sim \pi_k} \right) := 
\M\left(\X_{w_{1,k}}, r_{_{w_{1, k}}} \left(\X_{w_{1,k}}, \bo{Z}_{\sim w_{1, k}}  \right), \, \bo{\X}_{\sim \pi_k}  \right) \, ,    
$$   
with the innovation variable  $Z_{j_k}$ being part of $\bo{Z}_{\sim w_{1, k}}$ when $p_{j_k} \neq 1$ and  $Z_{j_k} = \X_{j_k}$ when $p_{j_k}=1$. The  SFs of $\X_{j_k}$ are given as follows:   
$$
\M_{j_k|u_k}^{fo}(Z_{j_k}) := \esp\left[g_{u_k,j_k}\left(\X_{w_{1,k}}, \bo{Z}_{\sim w_{1,k}} , \bo{\X}_{\sim \pi_k} \right) | Z_{j_k} \right]  - \esp\left[g_{u_k,j_k}\left(\X_{w_{1,k}}, \bo{Z}_{\sim w_{1,k}} , \bo{\X}_{\sim \pi_k} \right) \right] \, , 
$$ 
$$ 
\M_{j_k|u_k}^{tot}\left(\X_{w_{1,k}}, \bo{Z}_{\sim w_{1,k}} , \bo{\X}_{\sim \pi_k} \right) := g_{u_k,j_k}\left(\X_{w_{1,k}}, \bo{Z}_{\sim w_{1,k}} , \bo{\X}_{\sim \pi_k} \right) - \esp_{Z_{j_k}}\left[g\left(\X_{w_{1,k}}, \bo{Z}_{\sim w_{1,k}} , \bo{\X}_{\sim \pi_k} \right) \right] \, . 
$$    
Taking the variance as the importance measure gives the main and total DSIs of $\X_{j_k}$, that is, \cite{lamboni26uq} 
\begin{equation}  
DS_{j_k} =  \frac{1}{d_k \Sigma} \sum_{\substack{u_k \subseteq  \pi_{k} \setminus \{j_k\}}}
 \binom{d_k-1}{|u_k|}^{-1}  \sigma_{u_k, j_k}^{fo} \, ,       
\end{equation}      
\begin{equation}  \label{eq;todsinew}
DS_{T_{j_k}} = \frac{1}{d_k \Sigma} \sum_{\substack{u_k \subseteq  \pi_{k} \setminus \{j_k\}}}
 \binom{d_k-1}{|u_k|}^{-1}  \sigma_{u_k, j_k}^{tot} \, ,   
\end{equation}       
where      
$$
\sigma_{u_k, j_k}^{fo} := \var\left\{ \M_{j_k|u_k}^{fo}(Z_{j_k}) \right\} ,
\qquad \quad
\sigma_{u_k, j_k}^{tot} := \var\left\{ \M_{j_k|u_k}^{tot}\left(\X_{w_{1,k}}, \bo{Z}_{\sim w_{1,k}} , \bo{\X}_{\sim \pi_k} \right) \right\} \, .             
$$   

The definition of DSIs for a group of inputs can be found in \cite{lamboni26uq}.  Computations of all the main and total DSIs require at most $C_l := 4m d_{\max} \binom{d_{\max}-1}{\left[\frac{d_{\max}-1}{2} \right]}$ model runs with $m$ being a given sample size and $d_{\max} := \max\left\{d_2, \ldots, d_K \right\}$. Note that $C_l \leq  4m d \binom{d-1}{\left[\frac{d-1}{2} \right]} \leq  2m d \, 2^d$. For independent variables, DSIs are exactly Sobol' indices.      
		           
\subsection{Variance-based Shapley effects}
Shapley values (\cite{shapley53}) are used in the game theory for distributing a given reward among teams according to a coalition worth value function. The variance-based Shapley effects make use of the first-order-effect variance as a coalition worth value, leading to a global indicator of inputs' importance, especially when the inputs are non-independent (\cite{owen14b,song16,owen17,iooss19}). Note that such effects mix out the main and interaction effects in only one index per input. In the same sense, derivative-based Shapley values have been considered in \cite{duan24,lamboni26mcs} for assessing the effects of inputs using model derivatives, including dependent gradients.\\   
Formally, the Vb-Shapley effects of the input $\X_{j}$ for any $j \in\{1, \ldots, d\}$ is defined by (\cite{owen14b,owen17})   
$$
Sh_{j} =  \frac{1}{d \Sigma} \sum_{\substack{u \subseteq  \{1, \ldots, d \} \setminus \{j\}}}
\binom{d-1}{|u|}^{-1}  
\left\{ \var\left\{ \esp\left[\M\left(\bo{\X}\right) | \bo{\X}_{u\cup \{j\}} \right] \right\} 
- \var\left\{ \esp\left[\M\left(\bo{\X}\right) | \bo{\X}_{u} \right] \right\}  \right\} \, .  
$$ 
    
For linear models and linear probabilistic distributions, the following equalities between Shapley effects and DSIs hold.  
\begin{theorem} (\cite{lamboni26uq}).
Assume that $\M$ is a linear model, and $\bo{\X}$ follows the Gaussian distribution with a full rank covariance matrix. Then,  
$$
Sh_{j} = DS_j = DS_{T_j}  \, .     
$$
\end{theorem}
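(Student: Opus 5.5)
The plan is to use the linear-Gaussian structure to make every equivalent representation a linear function of independent Gaussian variables. In such a function main and total effects coincide. I then identify each variance $\sigma^{fo}_{u_k,j_k}$ with a Shapley marginal contribution, and finally reduce the Shapley sum over all of $\{1,\ldots,d\}$ to a sum over the block $\pi_k$ containing $j$.

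\emph{Step 1 (Gaussian dependency models are affine).} Write $\M(\bo{\X}) = c_0 + \sum_{k=1}^K \beta_{\pi_k}^\top \bo{\X}_{\pi_k}$. For a permutation $\bo{w}_k=(u_k,j_k,\bo{w}_{\sim p_{j_k}})$ of $\pi_k$, the full-rank Gaussian assumption gives the following affine representations:
\begin{itemize}
\item the conditional law of $\X_{j_k}$ given $\bo{\X}_{u_k}$ is $\X_{j_k} = a^\top \bo{\X}_{u_k} + b\, Z_{j_k}$, with $b>0$ and $Z_{j_k}$ standard Gaussian independent of $\bo{\X}_{u_k}$;
\item recursively, each later variable is $\X_{w_{i,k}} = (\text{affine in } \bo{\X}_{u_k}, Z_{j_k}, \ldots) + b_i Z_{w_{i,k}}$.
\end{itemize}
These are exactly dependency models as in (\ref{eq:mdepk}), or they are equal in law to them, which is all that matters since the DSIs only depend on distributions. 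Hence $g_{u_k,j_k}$ is an affine function of the mutually independent Gaussian variables $(\bo{\X}_{u_k}, Z_{j_k}, \bo{Z}_{\bo{w}_{\sim p_{j_k}}}, \bo{\X}_{\sim\pi_k})$. There I need to check that the first element of $u_k$ plays the role of $\X_{w_{1,k}}$, and that the remaining elements of $u_k$ can also be replaced by their innovations without changing the $Z_{j_k}$ coefficient. For an additive function of independent variables there are no interactions, so $\M^{tot}_{j_k|u_k} = \M^{fo}_{j_k|u_k} = \gamma_{u_k,j_k} Z_{j_k}$. Therefore $\sigma^{tot}_{u_k,j_k}=\sigma^{fo}_{u_k,j_k}=\gamma_{u_k,j_k}^2$, and comparing the two formulas for $DS_{j_k}$ and (\ref{eq;todsinew}) term by term gives $DS_{j_k}=DS_{T_{j_k}}$. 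For $j_1\in\pi_1$ this is immediate, since $\M$ is additive in $\X_{j_1}$.

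\emph{Step 2 (identification with conditional-expectation increments).} Because $Z_{j_k}$ is a function of $(\bo{\X}_{u_k},\X_{j_k})$ independent of $\bo{\X}_{u_k}$, and the remaining innovations and $\bo{\X}_{\sim\pi_k}$ are independent of both, we get
$$
\esp[\M(\bo{\X})\,|\,\bo{\X}_{u_k\cup\{j_k\}}] = \esp[\M(\bo{\X})\,|\,\bo{\X}_{u_k}] + \gamma_{u_k,j_k} Z_{j_k} .
$$
The two summands are uncorrelated, so
$$
\sigma^{fo}_{u_k,j_k} = \var\{\esp[\M|\bo{\X}_{u_k\cup\{j_k\}}]\} - \var\{\esp[\M|\bo{\X}_{u_k}]\} .
$$
This is consistent with Lemma~\ref{lem:condeff}.

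\emph{Step 3 (reducing Shapley to the block).} Let $v(u):=\var\{\esp[\M|\bo{\X}_u]\}$. By the mutual independence of the blocks and the additivity of $\M$,
$$
v(u)=\sum_{k} v_k(u\cap\pi_k), \qquad v_k(w):=\var\{\esp[\beta_{\pi_k}^\top\bo{\X}_{\pi_k}|\bo{\X}_w]\} .
$$
Thus $v$ is a sum of games, each supported on the disjoint player set $\pi_k$. By linearity of the Shapley value, together with the null-player property for players outside $\pi_k$ and the standard fact that adding dummy players does not change the Shapley value, $Sh_{j_k}$ equals the Shapley value of $j_k$ in the game $v_k$ on $\pi_k$ alone, normalised by $\Sigma$. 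That value is exactly
$$
\frac{1}{d_k\Sigma}\sum_{u_k\subseteq\pi_k\setminus\{j_k\}}\binom{d_k-1}{|u_k|}^{-1}\bigl(v_k(u_k\cup\{j_k\})-v_k(u_k)\bigr).
$$
By Step 2 this is $DS_{j_k}$. The case $d_k=1$ covers $\pi_1$.

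I expect the main obstacle to be Step 1. It requires a careful check that the paper's dependency model $r_{w_{1,k}}$, in which only $\X_{w_{1,k}}$ is kept and all other elements of $u_k$ are themselves innovations, yields the same coefficient of $Z_{j_k}$ as the conditional construction above. I would handle this by noting that the Gaussian Rosenblatt/Cholesky transform along $\bo{w}_k$ is triangular, so $Z_{j_k}$ is the normalised residual of $\X_{j_k}$ on $\bo{\X}_{u_k}$ whichever of the earlier variables are parametrised by innovations.
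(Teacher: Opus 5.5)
Your proof is correct. Note that the paper gives no proof of this statement: it imports it from \cite{lamboni26uq} and uses it mainly to show that the bracket of Theorem~\ref{theo:upBgen} is attained.

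Within the paper, however, the shortest route is your Step~1 alone. Every linear--Gaussian ER is additively separable in $Z_{j_k}$, so $\sigma^{fo}_{u_k,j_k}=\sigma^{tot}_{u_k,j_k}$ for all $u_k$, hence $DS_j=DS_{T_j}$. The sandwich $DS_j\le Sh_j\le DS_{T_j}$ of Theorem~\ref{theo:upBgen} then forces $Sh_j$ to coincide with both.

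Your Steps~2--3 instead prove the equality with $Sh_j$ directly. You identify each marginal contribution $v(u_k\cup\{j\})-v(u_k)$ with $\sigma^{fo}_{u_k,j}$. You then collapse the sum over $u\subseteq\{1,\ldots,d\}\setminus\{j\}$ to a sum over $\pi_k$, using block additivity of $v$ and the null-player property of the Shapley value. That reduction is exactly what the Vandermonde/hockey-stick evaluation $A_{k^*}=\binom{d_{k^*}-1}{|u_{k^*}|}^{-1}d/d_{k^*}$ in Appendix~\ref{app:theo:upBgen} computes. In effect, you re-derive the equality case of Theorem~\ref{theo:upBgen} with a game-theoretic argument in place of the combinatorics.

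The two routes buy different things:
\begin{itemize}
\item Your route is self-contained and identifies each Shapley increment with a DSI summand, term by term.
\item The sandwich route is shorter and more general. It needs only $\sigma^{fo}_{u_k,j}=\sigma^{tot}_{u_k,j}$ for all $u_k$, not additivity of $\M$ across blocks.
\end{itemize}

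One small caution concerns your Step~1 remark that it suffices for the DMs to be ``equal in law''. This must mean the joint law of $(\X_{w_{1,k}},\bo{Z}_{\sim w_{1,k}},\bo{\X}_{\pi_k})$, not only the law of $\bo{\X}_{\pi_k}$. A non-triangular square root of the conditional covariance reproduces the law of $\bo{\X}$ but changes the coefficient of $Z_{j_k}$, and hence $\sigma^{fo}_{u_k,j_k}$. The triangular (Rosenblatt/Cholesky) structure you invoke at the end settles this, and it is what Lemma~\ref{lem:condeff} presupposes.
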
    
  
Direct computations of all the Shapley effects require $C :=N_i N_0 d! (d-1) +N_v$  model evaluations, where $N_i$ and $N_0$ are the inner and outer loop sizes, and $N_v$ is the sample size for the variance computation (see \cite{song16,iooss19} for more details). For instance, when $\bo{\X}$ consists only of  one block of dependent variables, we can check that $C_l = 4N_v \frac{d!}{ \left[ \frac{d-1}{2}\right]! \,  \left(d-1 - \left[\frac{d-1}{2}\right] \right)!} \leq C$ for moderate and high dimensional models. Therefore, we have  $C_l \leq C$ in general, meaning that DSIs require less model evaluations compared to Shapley effects. Concerning Shapley effects, substantial reductions of the computational cost have been achieved in (i) \cite{plischke21}, leading to the total cost of  magnitude $2^d$, and in (ii) \cite{benoumechiara21} by randomly choosing $m$ permutations out of $d!$, resulting in the total cost $C' :=N_i N_0 m (d-1) +N_v$. For independent variables, it seems that the cost  $N(d+1)$ can be reached using  a randomized version of Shapley effects (\cite{goda21}).   
						      
\section{Main results}
\subsection{Links between dependent sensitivity indices and Shapley effects} 
For the sake of pedagogy concerning the proofs, we start this section with a general inequality link between Shapley effects and DSIs for one group of dependent variables, followed by inequalities involving both types of inputs' effects for a tuple of $K$ sets of random variables that are mutually independent.
   
\subsubsection{Case of one group of dependent variables} \label{sec:link1}
In this section, we assume that $\bo{\X}$ consists of dependent variables, and it cannot be spitted out into two or more sets of independent variables. By notation, $d=d_2,\, j_2=j,\;  \pi_{2} = \{1, \ldots, d\}$ in this section. \\     

For a given $j \in \pi_{2}$ and $u \subseteq \pi_{2} \setminus\{j \}$ with $|u_2|=p_j-1$, there exists a permutation $\bo{w}$ of $\pi_{2}$ such that $u = \{w_{1}, \ldots, w_{p_j-1} \}$ and $j= w_{p_j}$. By using $\boldsymbol{\varpi} := (w_{2}, \ldots, w_{p_j-1})$, the Vb-Shapley effect of $\X_{j}$ becomes   
\begin{eqnarray}  
 Sh_{j}  &=&  \frac{1}{d \Sigma} \sum_{\substack{u \subseteq  \pi_{2} \setminus \{j\}}}   
 \binom{d-1}{|u|}^{-1}  
 \left( \var\left\{ \esp\left[g_{u, j}\left(\X_{w_{1}}, \bo{Z}_{\sim w_{1}}\right) | \X_{w_{1}}, \bo{Z}_{\boldsymbol{\varpi}},  Z_{j} \right] \right\}  \right.   \nonumber    \\
& &  \left.  -    
\var\left\{ \esp\left[g_{u, j}\left(\X_{w_{1}}, \bo{Z}_{\sim w_{1}}\right)| \X_{w_{1}}, \bo{Z}_{\boldsymbol{\varpi}} \right] \right\}     \right)   \, ,           \nonumber           
\end{eqnarray}    
because it is shown in \cite{lamboni25math} that         
$$
\esp\left[\M(\bo{\X})  | \bo{\X}_{u \cup j} \right] 
=
\esp\left[g_{u, j}\left(\X_{w_{1}}, \bo{Z}_{\sim w_{1}}\right) | \X_{w_{1}}, \bo{Z}_{\boldsymbol{\varpi}},  Z_{j} \right]  \, ,  
$$    
$$
\esp\left[\M(\bo{\X})  | \bo{\X}_{u} \right]  
=
\esp\left[g_{u, j}\left(\X_{w_{1}}, \bo{Z}_{\sim w_{1}}\right) | \X_{w_{1}}, \bo{Z}_{\boldsymbol{\varpi}} \right] \, .   
$$    

Based on these elements, the relationships between the DSIs and Shapley effects are given below. 
\begin{theorem} \label{theo:ubone}
Assume that $\bo{\X}$ consists of one group of $d$ dependent variables. Then,   
$$
DS_j \leq Sh_j \leq DS_{T_j}, \quad \forall\, j \in \pi_2  \, .          
$$  
\end{theorem}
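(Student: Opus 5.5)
Because $DS_j$, $Sh_j$ and $DS_{T_j}$ are all written with the same weights $\frac{1}{d\Sigma}\binom{d-1}{|u|}^{-1}$ over $u \subseteq \pi_2\setminus\{j\}$, the plan is to compare them term by term. Concretely, for every such $u$ (with its permutation $\bo{w}$, where $u=\{w_1,\ldots,w_{p_j-1}\}$ and $w_{p_j}=j$), I will show
$$
\sigma_{u,j}^{fo} \;\leq\; \var\left\{ \esp\left[g_{u,j} | \X_{w_1}, \bo{Z}_{\boldsymbol{\varpi}}, Z_j\right]\right\} - \var\left\{ \esp\left[g_{u,j} | \X_{w_1}, \bo{Z}_{\boldsymbol{\varpi}}\right]\right\} \;\leq\; \sigma_{u,j}^{tot} \, .
$$
Summing these inequalities with the positive weights then gives $DS_j\le Sh_j\le DS_{T_j}$.

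To set this up, note that in the representation $g_{u,j}(\X_{w_1},\bo{Z}_{\sim w_1})$ all inputs are independent. Write $A:=(\X_{w_1},\bo{Z}_{\boldsymbol{\varpi}})$, which represents $\bo{X}_u$, and let $B$ collect the innovations placed after $j$, so that $g_{u,j}=h(A,Z_j,B)$ with $A$, $Z_j$ and $B$ mutually independent. The Shapley increment is
$$
\Delta := \var\{\esp[h|A,Z_j]\}-\var\{\esp[h|A]\}.
$$
The ordinary Hoeffding--Sobol decomposition applies to $h$ in the three independent groups $(A,Z_j,B)$, so $\Delta$ equals the closed-index variance of $\{A,Z_j\}$ minus that of $A$. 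That is, $\Delta = V_{Z_j} + V_{A Z_j}$, the variance of the $Z_j$ main effect plus the $A$–$Z_j$ interaction. In the same decomposition, $\sigma_{u,j}^{fo}=\var\{\esp[h|Z_j]\}=V_{Z_j}$, which is the first-order Sobol variance of $Z_j$. Also, $\sigma_{u,j}^{tot}=\var\{h-\esp_{Z_j}[h]\}$ is the total Sobol variance $V_{Z_j}+V_{AZ_j}+V_{Z_jB}+V_{AZ_jB}$. In this last step I use $\esp[h-\esp_{Z_j}h]=0$, so that the variance equals $\esp[(h-\esp_{Z_j}h)^2]$, which is the standard total-effect expression.

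Both inequalities then follow because the added ANOVA terms are variances and hence nonnegative. Equivalently, one can avoid naming ANOVA terms. For the lower bound, $\var\{\esp[h|A,Z_j]\}\ge \var\{\esp[h|A]\}+\var\{\esp[h|Z_j]\}$ holds since $\esp[h|A]-\esp h$ and $\esp[h|Z_j]-\esp h$ are orthogonal projections onto orthogonal subspaces of $L^2(\sigma(A,Z_j))$, by independence of $A$ and $Z_j$. For the upper bound, $\Delta=\esp\left[\var(\esp[h|A,Z_j]\,|\,A)\right]$, and by Jensen this is at most $\esp\left[\var(h|A,B)\right]=\esp[(h-\esp_{Z_j}h)^2]$.

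The main point to check carefully is the identification step. I need that the DSI quantities $\sigma^{fo}_{u,j}$ and $\sigma^{tot}_{u,j}$ are computed on the same ER $g_{u,j}$ (same permutation $\bo{w}$) as the one used in the rewritten $Sh_j$. I also need that the conditional expectation identities from \cite{lamboni25math} turn the Shapley increments into statements about $h$ with independent inputs. The degenerate case $u=\emptyset$, where $Z_j=\X_j$ and $A$ is empty, must be handled separately; there all three quantities reduce to the main or total effect of $\X_j$ at the first position, and the inequalities are immediate.
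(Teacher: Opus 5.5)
Your proposal is correct and follows essentially the paper's proof. The paper also compares term by term on the same ER $g_{u,j}$ and uses the Hoeffding decomposition of $g_{u,j}$ (over individual inputs rather than your three blocks $(A,Z_j,B)$) to write the Shapley increment as $\sum_{r\subseteq\{w_1,\boldsymbol{\varpi}\}}\Sigma_{r\cup\{j\}}$, which lies between $\Sigma_j=\sigma^{fo}_{u,j}$ and $\sum_{r\ni j}\Sigma_r=\sigma^{tot}_{u,j}$, before summing with the common weights; your projection/Jensen variant is just a decomposition-free rephrasing of the same two inequalities.
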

\begin{preuve}
See  Appendix \ref{app:theo:ubone}.     
\hfill $\square$  
\end{preuve}           
  
It comes out that DSIs from \cite{lamboni26uq} bracket Shapley effects in the case of one group of dependent variables, likewise Sobol' indices bracket Shapley effects for independent variables.

\subsubsection{Case of a tuple of $K$ independent random sets} \label{sec:link2} 
To express the Shapley effect of $\X_{j}$ in the case of $K>1$ groups of independent variables, assume without loss of generality that $j \in \pi_{k^*}$ with $k^* \in \{1, \ldots, K\}$. All of the subsets $u \subseteq  \{1, \ldots, d \} \setminus \{j\}$ can be  written as follows: 
$$
\left\{u\, :  u \subseteq  \{1, \ldots, d \} \setminus \{j\}   \right\} =
\left\{ \{u_1, \ldots, u_K\}\, :  u_k \subseteq  (\pi_{k} \setminus \{j\} ),\forall \; k \in \{1, \ldots, K\}  \right\}  \, ,    
$$      
leading to 
\begin{eqnarray} 
Sh_{j} &=&  \frac{1}{d \Sigma} \sum_{\substack{u_k \subseteq \pi_{k}\\ \forall\,  k \neq k^*}}
 \sum_{\substack{u_{k^*} \subseteq \left(\pi_{k^*} \setminus \{j\}  \right)}} 
\binom{d-1}{|u_1|+ \ldots + |u_K|}^{-1}      \nonumber \\
& & \times \left( \var\left\{ \esp\left[\M\left(\bo{\X}\right) | \bo{\X}_{u_{k^*}\cup \{j\}}, \bo{\X}_{u_{\sim k^*}} \right] \right\} 
- \var\left\{ \esp\left[\M\left(\bo{\X}\right) | \bo{\X}_{u_{k^*}}, \bo{\X}_{u_{\sim k^*}} \right] \right\}  \right) \, , \nonumber           
\end{eqnarray}             
with  $\bo{\X}_{u_{\sim k^*}} := (\bo{\X}_{u_\ell}, \, \forall\, \ell \in \{1, \ldots, K\} \setminus \{k^*\})$. \\   
 
The above expression of $Sh_j$ relies on $\M(\bo{\X})$, and it is worth deriving an equivalent expression using the ER $g(\cdot)$. Given a permutation $\bo{w}_{k}$ of $\pi_k$ such that $u_k=\{w_{1,k}, \ldots, w_{p_{j_k}-1,k}\}$, consider $\boldsymbol{\varpi_k} := (w_{2, k}, \ldots,  w_{p_{j_k}-1,k})$ for any $k \in \{2, \ldots, K\}$, leading to $u_k =\{w_{1, k}, \boldsymbol{\varpi}_k\}$ with $u_k=w_{1, k}$ when $|u_k|=1$.    
When $k=k^*$, consider a permutation $\bo{w}_{k^*}$ of $\pi_{k^*}$ such that $p_{j_{k^*}}$ is the position of $j_{k^*} =j$ in that permutation, that is, $w_{p_{j_{k^*}}, k^*} =j$ and $\bo{w}_{\sim p_{j}} := (w_{p_{j}+1,k^*}, \ldots, w_{d_{k^*},k^*})$.  \\     
  
 For concise notations, denoting $\bo{Z}_{\boldsymbol{\varpi}_k} := (Z_{\ell}, \, \forall\, \ell \in \{\boldsymbol{\varpi}_k\})$;
$\bo{Z}_{\sim \boldsymbol{\varpi}_k} := \left(Z_{\ell}, \, \forall\, \ell \in \{\pi_k \setminus u_k\}\right)$ leads to the partition $\bo{Z}_{\sim w_{1, k}} = \left(\bo{Z}_{\boldsymbol{\varpi}_k}, \bo{Z}_{\sim \boldsymbol{\varpi}_k} \right)$. Define   
$$
\XE_{u_k} := (\X_{w_{1,k}}, \bo{Z}_{\boldsymbol{\varpi}_k}), \; k=2, \ldots, K \, ,    
$$  
$$
\bo{Z}_{\sim \boldsymbol{\varpi}_{k^*}} := \left(\bo{Z}_{\sim \boldsymbol{\varpi}_k}, \forall\, k \in \{2\, , \ldots, K\} \setminus \{k^*\}  \right)  \, .        
$$  
        
Based on such notations, the ER in Equation (\ref{eq:indeg}) can be written as follows:
\begin{equation} \label{eq:erkstar}
g\left(\bo{\X}_{\pi_1},  \bo{\X}_{\Select}, \bo{Z} \right)  
 =   g\left(\bo{\X}_{\pi_1}, \XE_{u_{2}}, \ldots, \XE_{u_{K}}, Z_j, \bo{Z}_{\bo{w}_{\sim p_{j}}}, \, \bo{Z}_{\sim \boldsymbol{\varpi}_{k^*}}  \right)  \, .       
\end{equation}    
   
Using the identities shown in \cite{lamboni25math},  that is,   
$$
\esp\left[\M\left(\bo{\X}\right) | \bo{\X}_{u_{k^*}}, \bo{\X}_{u_{\sim k^*}} \right] =
\esp\left[ g\left(\bo{\X}_{\pi_1},  \bo{\X}_{\Select}, \bo{Z} \right) | \bo{\X}_{u_1}, \XE_{u_2}, \ldots, \XE_{u_K} \right] \, , 
$$   
$$   
\esp\left[\M\left(\bo{\X}\right) | \bo{\X}_{u_{k^*}\cup \{j\}}, \bo{\X}_{u_{\sim k^*}} \right] = \esp\left[ g\left(\bo{\X}_{\pi_1},  \bo{\X}_{\Select}, \bo{Z} \right) | \bo{\X}_{u_1}, \XE_{u_2}, \ldots, \XE_{u_K}, Z_j \right]\, ,  
$$
the Shapley effect of $\X_j$ becomes    
\begin{eqnarray} 
& & Sh_{j} =  \frac{1}{d \Sigma} \sum_{\substack{u_k \subseteq \pi_{k}\\ \forall\,  k \neq k^*}}
 \sum_{\substack{u_{k^*} \subseteq \left(\pi_{k^*} \setminus \{j\}  \right)}} 
\binom{d-1}{|u_1|+ \ldots + |u_K|}^{-1} \times     \nonumber \\
& & \left( \var\left\{\esp\left[ g\left(\bo{\X}_{\pi_1},  \bo{\X}_{\Select}, \bo{Z} \right) | \bo{\X}_{u_1}, \XE_{u_2}, \ldots, \XE_{u_K}, Z_j \right] \right\}    
- \var\left\{\esp\left[ g\left(\bo{\X}_{\pi_1},  \bo{\X}_{\Select}, \bo{Z} \right) | \bo{\X}_{u_1}, \XE_{u_2}, \ldots, \XE_{u_K} \right] \right\}  \right) \, . \nonumber              
\end{eqnarray}  
  
Based on such a new formulation of $Sh_{j}$, the following general inequality holds.  
\begin{theorem}  \label{theo:upBgen}
Assume $\bo{\X}$ consists of $K$ groups of independent random sets. Then,    
\begin{equation} \label{eq:upBgen}
  DS_{j}   \leq Sh_{j}   \leq  DS_{T_j} \, .  
\end{equation}    
\end{theorem}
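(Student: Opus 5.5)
The plan is to reduce Theorem~\ref{theo:upBgen} to the elementary bracketing of closed-set increments by first-order and total Sobol' terms, applied to the ER $g$ in (\ref{eq:erkstar}), which involves only independent inputs. I will bound each summand of the reformulated $Sh_j$ from below and from above by quantities that depend only on $u_{k^*}$. Then I will collapse the sum over the subsets $u_k$, $k\neq k^*$, with a combinatorial identity on the Shapley weights.

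\emph{Step 1 (termwise bracketing).} Fix $u_1,\ldots,u_K$ and write $\bo{A}:=(\bo{\X}_{u_1},\XE_{u_2},\ldots,\XE_{u_K})$. By construction of the DMs, $\bo{A}$, $Z_j$ and the remaining variables $\bo{B}$ of $g$ (namely the rest of $\bo{\X}_{\pi_1}$, $\bo{Z}_{\bo{w}_{\sim p_j}}$ and $\bo{Z}_{\sim\boldsymbol{\varpi}_{k^*}}$) are mutually independent. Using the Hoeffding--Sobol' decomposition of $g$ with respect to the independent blocks $(\bo{A},Z_j,\bo{B})$, the increment is
\[
\var\{\esp[g\,|\,\bo{A},Z_j]\}-\var\{\esp[g\,|\,\bo{A}]\}=\sum_{v\subseteq \bo{A}}\var\{g_{v\cup\{Z_j\}}\} .
\]
This sum is at least the term $v=\emptyset$, which is $\var\{\esp[g|Z_j]\}$. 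It is at most the sum over all components containing $Z_j$, which is $\var\{g-\esp_{Z_j}[g]\}$.

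\emph{Step 2 (identification with the DSI terms).} I will check that $\var\{\esp[g|Z_j]\}=\sigma^{fo}_{u_{k^*},j}$ and $\var\{g-\esp_{Z_j}[g]\}=\sigma^{tot}_{u_{k^*},j}$. The point is that, after integrating out the other blocks (which are independent of $\bo{X}_{\pi_{k^*}}$), $g$ coincides with the ER $g_{u_{k^*},j}$ built from the permutation $\bo{w}_{k^*}$ alone. Hence these quantities depend on $u_{k^*}$ only and not on $u_k$ for $k\neq k^*$. This also relies on Lemma~\ref{lem:condeff}.

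For $k^*=1$ the roles are played by $\X_j$ itself and the quantities no longer depend on any $u$. The same argument then gives $DS_j\le Sh_j\le DS_{T_j}$ directly, once the Shapley weights are known to sum to one.

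\emph{Step 3 (weights), which I expect to be the main obstacle.} After Steps 1--2, it remains to show that for every $a\in\{0,\ldots,d_{k^*}-1\}$,
\[
\frac1d\sum_{b=0}^{d-d_{k^*}}\binom{d-d_{k^*}}{b}\binom{d-1}{a+b}^{-1}=\frac{1}{d_{k^*}}\binom{d_{k^*}-1}{a}^{-1}.
\]
Here $b=\sum_{k\neq k^*}|u_k|$, and the number of choices of $(u_k)_{k\neq k^*}$ with total size $b$ is $\binom{d-d_{k^*}}{b}$. I will prove this identity probabilistically. The left-hand side times $\binom{d_{k^*}-1}{a}$ is the probability that, in a uniform random permutation of $\{1,\ldots,d\}$, exactly $a$ elements of $\pi_{k^*}\setminus\{j\}$ precede $j$. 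This uses the fact that $\frac1d\binom{d-1}{|u|}^{-1}$ is the probability that the predecessor set of $j$ equals $u$. Restricting the random permutation to $\pi_{k^*}$ gives a uniform permutation of $\pi_{k^*}$, so this probability equals $1/d_{k^*}$, which yields the identity.

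Plugging the identity into the bounds of Step 1 reproduces exactly the weights in (\ref{eq;todsinew}) and in the formula for $DS_{j_k}$, hence (\ref{eq:upBgen}).
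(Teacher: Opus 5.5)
Your proof is correct, and Steps 1--2 match the paper's argument. The paper also applies the Hoeffding decomposition to the ER (\ref{eq:erkstar}) with its inputs grouped into independent blocks. It writes each Shapley increment as $\sum_{v}\Sigma_{v\cup\{u_{K+1}\}}$ and brackets it between $\Sigma_{u_{K+1}}=\sigma^{fo}_{u_{k^*},j}$ and $\sigma^{tot}_{u_{k^*},j}$.

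The genuine difference is in Step 3. The paper proves $A_{k^*}=\frac{d}{d_{k^*}}\binom{d_{k^*}-1}{|u_{k^*}|}^{-1}$ algebraically, in three moves:
\begin{itemize}
\item Vandermonde convolution counts the tuples $(u_k)_{k\neq k^*}$ of total size $m$.
\item A factorial rearrangement turns $\binom{d-d_{k^*}}{m}\binom{d_{k^*}-1}{a}\binom{d-1}{a+m}^{-1}$ into $\binom{d-1-a-m}{d_{k^*}-1-a}\binom{a+m}{a}\binom{d-1}{d_{k^*}-1}^{-1}$.
\item A generalized hockey-stick identity then gives $\binom{d}{d_{k^*}}\binom{d-1}{d_{k^*}-1}^{-1}=d/d_{k^*}$.
\end{itemize}
You instead read $\frac1d\binom{d-1}{|u|}^{-1}$ as the law of the predecessor set of $j$ under a uniformly random ordering. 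You then use that the induced ordering of $\pi_{k^*}$ is uniform, so the rank of $j$ inside its block is uniform on $\{1,\ldots,d_{k^*}\}$.

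Your route is shorter and avoids the factorial bookkeeping. It also explains why the DSI weights are exactly the marginal of the Shapley weights on $\pi_{k^*}$, which is the conceptual reason the bracketing survives the passage from one block to $K$ blocks. The paper's computation is more mechanical, but it can be checked line by line without any probabilistic interpretation.

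One wording point in Step 2. For $\sigma^{tot}_{u_{k^*},j}$, the correct justification is not that the other blocks are integrated out. Rather, $g$ is $g_{u_{k^*},j}$ composed with a law-preserving reparametrization of $\bo{\X}_{\sim\pi_{k^*}}$ that is independent of the block-$k^*$ inputs. Hence $g-\esp_{Z_j}[g]$ and $g_{u_{k^*},j}-\esp_{Z_j}[g_{u_{k^*},j}]$ have the same law. The paper simply asserts both identities.
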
     
\begin{preuve}    
See Appendix \ref{app:theo:upBgen}.  
 \hfill $\square$              
\end{preuve} 
     
Now, one can state that DSIs bracket Shapley effects for any set of non-independent variables. 
It turns out that the total dependent  index of $\X_j$ is an upper-bound of the Shapley effect of $\X_j$. 
    
\subsection{Upper-bounds of dependent sensitivity indices}  \label{sec:up}   
Based on Equation (\ref{eq;todsinew}) and the ER   
$ 
g_{u_k,j_k}\left(\X_{w_{1,k}}, \bo{Z}_{\sim w_{1,k}} , \bo{\X}_{\sim \pi_k} \right)
$, 
which includes only blocks of independent variables, we are able to derive the upper-bounds of DSIs. Such derivative-based upper-bounds require the partial derivatives of $g_{u_k,j_k}(\cdot)$ w.r.t. $Z_{j_k}$ for each set $u_k \subseteq \pi_k \setminus \{j_k\}$. By considering the DM of $\bo{\X}_{\pi_k}$ associated with the permutation $\bo{w}_{k} =\left(u_k,\, w_{p_{j_k}},\,  \bo{w}_{\sim p_{j_k}} \right)$, that is,  
\begin{equation} \label{eq;depm}
\bo{\X}_{\sim w_{1,k}} = r_{u_k, j_k} \left(\XE_{u_k}, Z_{j_k}, \bo{Z}_{\bo{w}_{\sim p_{j_k}}} \right)  \, , 
\end{equation} 
denote with $J^{(u_k,j_k)} \in \R^{d_k}$ the vector containing the partial derivatives of $\bo{\X}_{\pi_{k}}$ w.r.t. $Z_{j_k}$. We can check that the entries of $J^{(u_k,j_k)}$ are  given by   
$$  
J^{(u_k,j_k)}_\ell = \left\{\begin{array}{cl}    
0  &   \mbox{if}  \;  \ell \in u_k\\ 
\frac{\partial \X_{w_{|u_k|+\ell, k}}}{\partial z_{j_k}}  &   \mbox{if}  \;  \ell =1, \ldots, d_k- |u_k|\\
\end{array}                 
\right.           \, .       
$$   
By independence, the derivatives of $\bo{\X}_{\sim \pi_k}$ w.r.t. $Z_{j_k}$ vanish.        

\begin{theorem}   \label{theo:upbdst}
Assume that $\M$ and the dependency functions are (weakly) differentiable functions. If $\left|\frac{\partial \M}{x_{j_k}} \right| \leq M_{1,k}$ for any $j_k \in \pi_k$, then $DS_{T_{j_k}}  \leq  D\!U\!B_{j_k}$ with
$$
D\!U\!B_{j_k} :=  \frac{M_{1,k}^2}{2 d_k \Sigma} \sum_{\substack{u_k \subseteq  \pi_{k} \setminus \{j_k\}}}
 \binom{d_k-1}{|u_k|}^{-1} (d_k- |u_k|) \, 
 \esp\left[ \norme{J^{(u_k,j_k)}}^2 \frac{F_{z_{j_k}}\left(Z_{j_k}\right) \left[1- F_{z_{j_k}}\left(Z_{j_k}\right) \right]}{\left(\rho_{z_{j_k}}\left(Z_{j_k}\right) \right)^2} \right]  \, .  
$$       
\end{theorem}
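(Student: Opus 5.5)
The proof is term by term in the representation (\ref{eq;todsinew}). For each $u_k \subseteq \pi_k\setminus\{j_k\}$ we show
$$
\sigma^{tot}_{u_k,j_k} \leq \frac{M_{1,k}^2}{2}\,(d_k-|u_k|)\, \esp\left[ \norme{J^{(u_k,j_k)}}^2 \frac{F_{z_{j_k}}(Z_{j_k})\left[1-F_{z_{j_k}}(Z_{j_k})\right]}{\left(\rho_{z_{j_k}}(Z_{j_k})\right)^2}\right],
$$
and then sum with the weights $\frac{1}{d_k\Sigma}\binom{d_k-1}{|u_k|}^{-1}$ to obtain $D\!U\!B_{j_k}$.

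\textbf{Step 1 (one-dimensional Poincar\'e bound in $Z_{j_k}$).} Fix $u_k$ and the ER $g_{u_k,j_k}$. Its inputs $\X_{w_{1,k}}$, $\bo{Z}_{\sim w_{1,k}}$ and $\bo{\X}_{\sim\pi_k}$ are independent, and $Z_{j_k}$ is a single independent input. Hence $\sigma^{tot}_{u_k,j_k}$ is exactly the (unnormalized) total-effect variance of $Z_{j_k}$ for a function of independent variables. Applying the sharp bound of \cite{lamboni22} already recalled for $DS_{T_{j_1}}$, with $\X_{j_1}$ replaced by $Z_{j_k}$ and $\M$ by $g_{u_k,j_k}$, gives
$$
\sigma^{tot}_{u_k,j_k} \leq \frac12\, \esp\left[\left(\frac{\partial g_{u_k,j_k}}{\partial z_{j_k}}\right)^2 \frac{F_{z_{j_k}}(Z_{j_k})\left[1-F_{z_{j_k}}(Z_{j_k})\right]}{\rho_{z_{j_k}}(Z_{j_k})^2}\right].
$$
When $p_{j_k}=1$, i.e. $u_k=\emptyset$, we have $Z_{j_k}=\X_{j_k}$ and the same bound applies.

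\textbf{Step 2 (chain rule).} Use the DM (\ref{eq;depm}). The variables $\XE_{u_k}$ (hence $\bo{\X}_{u_k}$) and $\bo{\X}_{\sim\pi_k}$ do not depend on $Z_{j_k}$, so
$$
\frac{\partial g_{u_k,j_k}}{\partial z_{j_k}}=\sum_{\ell} \frac{\partial \M}{\partial x_{w_{\ell,k}}}\, J^{(u_k,j_k)}_\ell .
$$
The vector $J^{(u_k,j_k)}$ has at most $d_k-|u_k|$ nonzero entries: $\X_{j_k}$ itself and the later coordinates $\bo{w}_{\sim p_{j_k}}$. By Cauchy--Schwarz on these $d_k-|u_k|$ terms and the bound $|\partial\M/\partial x_\ell|\le M_{1,k}$ on every coordinate of block $\pi_k$,
$$
\left(\frac{\partial g_{u_k,j_k}}{\partial z_{j_k}}\right)^2 \le (d_k-|u_k|)\, M_{1,k}^2 \norme{J^{(u_k,j_k)}}^2 .
$$
Here the norm is the Euclidean norm of the Jacobian vector. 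This produces exactly the factor $(d_k-|u_k|)$ in $D\!U\!B_{j_k}$. Alternatively, one can bound each squared derivative by $M_{1,k}^2$ and use $\left(\sum_{\ell}|J_\ell|\right)^2\le (d_k-|u_k|)\sum_\ell J_\ell^2$, which gives the same result.

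\textbf{Step 3 (summation).} Insert the bound of Step 2 into Step 1, multiply by $\frac{1}{d_k\Sigma}\binom{d_k-1}{|u_k|}^{-1}$, and sum over $u_k$ using (\ref{eq;todsinew}). This yields $DS_{T_{j_k}}\le D\!U\!B_{j_k}$.

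\textbf{Main obstacle.} The delicate point is Step 1 under only weak differentiability of $\M\circ r$. One must check that the Poincar\'e-type inequality of \cite{lamboni22} holds for the composed function $g_{u_k,j_k}$, i.e. that it lies in the appropriate weighted Sobolev space in $z_{j_k}$. One must also check that $Z_{j_k}$ has a density $\rho_{z_{j_k}}$, so that the weight is well defined. I would handle this by assuming the right-hand side is finite (otherwise the bound is trivial), approximating by smooth functions, and invoking the chain rule for Sobolev functions composed with Lipschitz/differentiable dependency functions. Everything else is algebra.
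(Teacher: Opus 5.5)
Your proposal is correct and is essentially the paper's own proof. For each $u_k$, the paper likewise applies the weighted Poincar\'e-type bound $\sigma^{tot}_{u_k,j_k}\leq \frac12\esp\bigl[(\partial g_{u_k,j_k}/\partial z_{j_k})^2\, F_{z_{j_k}}(Z_{j_k})[1-F_{z_{j_k}}(Z_{j_k})]/\rho_{z_{j_k}}(Z_{j_k})^2\bigr]$, writes $\partial g_{u_k,j_k}/\partial z_{j_k}=(J^{(u_k,j_k)})^\T\nabla\M$, uses Cauchy--Schwarz with $\norme{\nabla_{\sim u_k}\M}^2\leq (d_k-|u_k|)M_{1,k}^2$, and sums with the weights of (\ref{eq;todsinew}), exactly as you do; the only differences are that you cite \cite{lamboni22} where the paper cites \cite{lamboni22b}, and you spell out the weak-differentiability technicalities the paper leaves implicit.
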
    
\begin{preuve}  
See Appendix \ref{app:theo:upbdst}. 
 \hfill $\square$   
\end{preuve}
 
Using Equation (\ref{eq:upBgen}), it follows that $Sh_j \leq DS_{T_j} \leq  D\!U\!B_{j}$.  The upper-bound derived in Theorem \ref{theo:upbdst} relies only on the maximum, absolute value of the components of the usual gradients of models and expectations involving different DMs. Such quantities can often be obtained without requiring much computational resources, even in higher dimensions. Indeed, dimension-free mean squared errors of estimators of gradients, proposed in \cite{lamboni26axioms}, can contribute to significantly reduce the computational cost in higher dimensions. The following Corollaries deal with particular cases.  
    
\begin{corollary}   \label{coro:upbdst1}
Under the conditions of Theorem \ref{theo:upbdst}, if $\bo{\X}_{\pi_k}$ follows the Gaussian distribution, then 
$$
DS_{T_{j_k}}  \leq \frac{M_{1,k}^2}{2 d_k \Sigma} \sum_{\substack{u_k \subseteq  \pi_{k} \setminus \{j_k\}}}
 \binom{d_k-1}{|u_k|}^{-1} (d_k- |u_k|) \norme{J^{(u_k,j_k)}}^2 \, 
 \esp\left[ \frac{F_{z_{j_k}}\left(Z_{j_k}\right) \left[1- F_{z_{j_k}}\left(Z_{j_k}\right) \right]}{\left(\rho_{z_{j_k}}\left(Z_{j_k}\right) \right)^2} \right] \, .   
$$
\end{corollary}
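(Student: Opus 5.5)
The plan is to derive the corollary directly from Theorem \ref{theo:upbdst}: under a Gaussian law the vector $J^{(u_k,j_k)}$ turns out to be deterministic, so it can be pulled out of the expectation. No new probabilistic bound is needed; the work is to identify the dependency model explicitly.

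\textbf{Step 1: the Gaussian dependency model.} If $\bo{\X}_{\pi_k}\sim\mathcal{N}(\mu,\Gamma)$ with $\Gamma$ of full rank, then for the permutation $\bo{w}_k=(u_k, j_k, \bo{w}_{\sim p_{j_k}})$ one can take the DM (\ref{eq;depm}) to be affine in the innovation variables. Order the variables along $\bo{w}_k$ and let $L$ be the lower-triangular Cholesky factor of the permuted covariance matrix. Then
$$
\bo{\X}_{\pi_k} = \mu + L\,(\X_{w_{1,k}}\text{-standardized}, \bo{Z}_{\sim w_{1,k}})^\top ,
$$
with independent Gaussian innovations $Z_\ell$. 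Equivalently, each coordinate equals its conditional mean given the preceding coordinates, which is linear, plus an independent Gaussian residual. Such a map is a valid DM: it reproduces the joint law, and $Z_{j_k}$ is independent of $\XE_{u_k}$. I will invoke the freedom of choosing the DM within the class considered in \cite{lamboni21,lamboni25math}.

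\textbf{Step 2: $J^{(u_k,j_k)}$ is constant.} Since the map is affine, $\partial \X_{w_{\ell,k}}/\partial z_{j_k}$ is an entry of the column of $L$ attached to $Z_{j_k}$. This entry vanishes for coordinates preceding $j_k$, i.e.\ those in $u_k$, consistent with the definition of $J^{(u_k,j_k)}$. It depends neither on $Z_{j_k}$ nor on any other random variable. Hence $\norme{J^{(u_k,j_k)}}^2$ is a deterministic constant.

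\textbf{Step 3: conclude.} Plugging this into the bound $D\!U\!B_{j_k}$ of Theorem \ref{theo:upbdst} and using linearity of the expectation gives
$$
\esp\left[\norme{J^{(u_k,j_k)}}^2\,\frac{F_{z_{j_k}}(1-F_{z_{j_k}})}{\rho_{z_{j_k}}^2}(Z_{j_k})\right]=\norme{J^{(u_k,j_k)}}^2\,\esp\left[\frac{F_{z_{j_k}}(1-F_{z_{j_k}})}{\rho_{z_{j_k}}^2}(Z_{j_k})\right],
$$
which is exactly the stated inequality. The remaining hypotheses of Theorem \ref{theo:upbdst} carry over: differentiability of the DM is automatic, and $\M$ keeps its bound $M_{1,k}$.

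\textbf{Main obstacle.} Step 1 is the delicate point: one must make sure that the DM used in defining $DS_{T_{j_k}}$ can be taken to be this affine Gaussian one. Otherwise one must argue that the total DSI does not depend on the chosen DM, by Lemma \ref{lem:condeff}, since $\sigma^{tot}_{u_k,j_k}$ depends only on the conditional law of $\M(\bo{\X})$. I would also settle the degenerate full-rank issue. If $\Gamma$ is singular on $\pi_k\setminus u_k$, some $Z$ may be degenerate and $\rho_{z_{j_k}}$ undefined; I would either assume full rank, as in the earlier theorem of \cite{lamboni26uq}, or interpret the corresponding term as zero, since then $J^{(u_k,j_k)}=0$.
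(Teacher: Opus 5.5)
Your proposal is correct and is essentially the paper's argument. The paper's one-line proof notes that, for Gaussian $\bo{\X}_{\pi_k}$, the affine dependency model with Gaussian innovations makes $J^{(u_k,j_k)}$ a vector of constants, so $\norme{J^{(u_k,j_k)}}^2$ factors out of the expectation in $D\!U\!B_{j_k}$ of Theorem \ref{theo:upbdst}, exactly as in your Steps 2--3.

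One caveat concerns your fallback remark. Lemma \ref{lem:condeff} only describes $\M(\bo{\X})$ given the conditional input, which pins down $\sigma^{fo}_{u_k,j_k}$, so it does not by itself make $\sigma^{tot}_{u_k,j_k}$ DM-invariant. The cleaner justification for using the affine DM is that, for a fixed permutation, a bijective re-parametrization of each innovation (e.g., switching from uniform to Gaussian innovations) leaves all conditional expectations of $g_{u_k,j_k}$ unchanged, and hence leaves $\sigma^{tot}_{u_k,j_k}$ unchanged.
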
   
 \begin{preuve}
It is straightforward, as $J^{(u_k,j_k)}$ is a vector of constants (see \cite{lamboni23axioms}).
\hfill $\square$
\end{preuve}
 
For models of the form $\M_l(\bo{\X}) := \boldsymbol{\beta}_{\pi_k}^\T \bo{\X}_{\pi_k} + \M_{\sim k}(\bo{\X}_{\sim \pi_k})$ with $\M_{\sim k}$ a given function and  $\boldsymbol{\beta}_{\pi_k} := (\beta_{\ell}, \forall \, \ell \in \pi_k) \in \R^{d_k}$, we have $M_{1,k} = \max_{\ell \in \pi_k} \left \{ \left| \beta_{\ell} \right|\right \}$.    
         
\begin{corollary}   \label{coro:upbdst2}
Let $M_{1,\sim u_k} := \sum_{\ell \in (\pi_k\setminus u_k)} \beta_{\ell}^2$. Under the conditions of Theorem \ref{theo:upbdst}, if $\M(\bo{\X})=\M_l(\bo{\X})$ then        
$$ 
DS_{T_{j_k}}  \leq \frac{1}{2 d_k \Sigma} \sum_{\substack{u_k \subseteq  \pi_{k} \setminus \{j_k\}}}
 \binom{d_k-1}{|u_k|}^{-1}  M_{1,\sim u_k} \, 
 \esp\left[ \norme{J^{(u_k,j_k)}}^2  \frac{F_{z_{j_k}}\left(Z_{j_k}\right) \left[1- F_{z_{j_k}}\left(Z_{j_k}\right) \right]}{\left(\rho_{z_{j_k}}\left(Z_{j_k}\right) \right)^2} \right] \, .   
$$               
\end{corollary}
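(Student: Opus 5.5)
We specialise the derivation behind Theorem~\ref{theo:upbdst} to $\M=\M_l$, and the only change is to replace the crude bound $M_{1,k}^2\,(d_k-|u_k|)\,\norme{J^{(u_k,j_k)}}^2$ by an exact chain-rule computation. For a fixed $u_k\subseteq \pi_k\setminus\{j_k\}$, composing $\M_l$ with the dependency model (\ref{eq;depm}) gives the ER
$$
g_{u_k,j_k} = \boldsymbol{\beta}_{u_k}^\T \bo{\X}_{u_k} + \boldsymbol{\beta}_{\pi_k\setminus u_k}^\T\, r_{u_k,j_k}\left(\XE_{u_k}, Z_{j_k}, \bo{Z}_{\bo{w}_{\sim p_{j_k}}}\right) + \M_{\sim k}(\bo{\X}_{\sim\pi_k}) .
$$
Its derivative with respect to $Z_{j_k}$ is $\sum_{\ell\in\pi_k\setminus u_k}\beta_\ell\, J^{(u_k,j_k)}_\ell$. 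Two facts make this work: $\bo{\X}_{u_k}$ does not depend on $Z_{j_k}$, so the corresponding entries of $J^{(u_k,j_k)}$ vanish, and $\M_{\sim k}$ does not involve $Z_{j_k}$ by the independence of blocks.

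\textbf{Step 1 (Poincar\'e-type bound per subset).} We apply the sharp upper-bound of \cite{lamboni22} (Theorem 3), already used for $D\!U\!B_{j_1}$, to the total sensitivity functional of the independent input $Z_{j_k}$ in $g_{u_k,j_k}$. This is legitimate because all arguments of $g_{u_k,j_k}$ are independent. It yields
$$
\sigma^{tot}_{u_k,j_k} \leq \frac12\, \esp\left[\left(\frac{\partial g_{u_k,j_k}}{\partial z_{j_k}}\right)^2 \frac{F_{z_{j_k}}(Z_{j_k})\left[1-F_{z_{j_k}}(Z_{j_k})\right]}{\left(\rho_{z_{j_k}}(Z_{j_k})\right)^2}\right].
$$

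\textbf{Step 2 (Cauchy--Schwarz).} We bound $\left(\sum_{\ell\in\pi_k\setminus u_k}\beta_\ell J_\ell\right)^2 \leq \left(\sum_{\ell\in\pi_k\setminus u_k}\beta_\ell^2\right)\norme{J^{(u_k,j_k)}}^2 = M_{1,\sim u_k}\norme{J^{(u_k,j_k)}}^2$. Here we use that only the entries indexed by $\pi_k\setminus u_k$ are nonzero. Since $M_{1,\sim u_k}$ is deterministic, it comes out of the expectation.

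\textbf{Step 3 (aggregation).} We insert the per-subset bounds into Equation~(\ref{eq;todsinew}) with the weights $\binom{d_k-1}{|u_k|}^{-1}/(d_k\Sigma)$, which gives the stated inequality.

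\textbf{Main obstacle.} The delicate point is the index bookkeeping in $J^{(u_k,j_k)}$. The definition mixes the index sets $u_k$ and the positions $\ell=1,\dots,d_k-|u_k|$, so one must verify that the entries of $J^{(u_k,j_k)}$ are paired with the correct coefficients $\beta_\ell$, $\ell\in\pi_k\setminus u_k$, through the permutation $\bo{w}_k$. The entry $\partial \X_{j_k}/\partial z_{j_k}$ itself belongs to this set. Apart from this, the argument is a direct refinement of the proof of Theorem~\ref{theo:upbdst}: Cauchy--Schwarz with the actual coefficients replaces the bound $(\sum_\ell |J_\ell|)^2\le (d_k-|u_k|)\norme{J}^2$ combined with $|\beta_\ell|\le M_{1,k}$.
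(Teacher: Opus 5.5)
Your proposal is correct and is essentially the paper's argument. The proof of Theorem~\ref{theo:upbdst} in Appendix~\ref{app:theo:upbdst} uses the same Poincar\'e-type bound on $\sigma^{tot}_{u_k,j_k}$, the chain rule $\partial g_{u_k,j_k}/\partial z_{j_k}=(J^{(u_k,j_k)})^\T\nabla\M$ with vanishing $u_k$-entries, and Cauchy--Schwarz to obtain $\norme{J^{(u_k,j_k)}}^2\norme{\nabla_{\sim u_k}\M}^2$; for $\M=\M_l$ this factor is the deterministic $M_{1,\sim u_k}\norme{J^{(u_k,j_k)}}^2$, so the corollary is that intermediate line (before $\norme{\nabla_{\sim u_k}\M}^2\le(d_k-|u_k|)M_{1,k}^2$) aggregated through (\ref{eq;todsinew}), exactly as in your Steps 1--3.
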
     
 \begin{preuve}
See the proof of Theorem \ref{theo:upbdst} in Appendix \ref{app:theo:upbdst}.  
\hfill $\square$ 
\end{preuve}  

\begin{corollary}   \label{coro:upbdst3}
Under the conditions of Corollary \ref{coro:upbdst2}, if $\bo{\X}_{\pi_k}$ follows the Gaussian distribution then
$$ 
DS_{T_{j_k}}  \leq \frac{1}{2 d_k \Sigma} \sum_{\substack{u_k \subseteq  \pi_{k} \setminus \{j_k\}}}
 \binom{d_k-1}{|u_k|}^{-1}  M_{1,\sim u_k} \norme{J^{(u_k,j_k)}}^2 \, 
 \esp\left[ \frac{F_{z_{j_k}}\left(Z_{j_k}\right) \left[1- F_{z_{j_k}}\left(Z_{j_k}\right) \right]}{\left(\rho_{z_{j_k}}\left(Z_{j_k}\right) \right)^2} \right] \, .    
$$                  
\end{corollary}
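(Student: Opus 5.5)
The plan is to obtain Corollary \ref{coro:upbdst3} by combining Corollary \ref{coro:upbdst2} with the observation already used in Corollary \ref{coro:upbdst1}: when $\bo{\X}_{\pi_k}$ is Gaussian, the Jacobian vector $J^{(u_k,j_k)}$ is deterministic. There are three steps.

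\textbf{Step 1: the Gaussian dependency model is affine.} For each $u_k \subseteq \pi_k\setminus\{j_k\}$, I will write the dependency model (\ref{eq;depm}) explicitly. Order the variables according to the permutation $\bo{w}_k=(u_k, j_k, \bo{w}_{\sim p_{j_k}})$. The inverse Rosenblatt transform, or equivalently the Cholesky-type representation of the conditional Gaussian laws, gives
\begin{equation*}
\bo{\X}_{\sim w_{1,k}} = \bo{a} + \bo{b}\,\X_{w_{1,k}} + L\,\bo{Z}_{\sim w_{1,k}}\,,
\end{equation*}
where $\bo{a}$ and $\bo{b}$ are constant vectors and $L$ is a constant lower-triangular matrix. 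These constants are fixed by the mean and the full-rank covariance of $\bo{\X}_{\pi_k}$, and the innovations $Z_\ell$ are taken standard Gaussian. Differentiating with respect to $z_{j_k}$, every entry of $J^{(u_k,j_k)}$ is a column entry of $L$. In particular it does not depend on $(\XE_{u_k}, Z_{j_k}, \bo{Z}_{\bo{w}_{\sim p_{j_k}}})$; this is the fact cited from \cite{lamboni23axioms} in the proof of Corollary \ref{coro:upbdst1}.

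\textbf{Step 2: pull the constant out of the expectation.} Since the model is linear in $\bo{\X}_{\pi_k}$, the hypotheses of Corollary \ref{coro:upbdst2} hold. That corollary gives, for each $u_k$, a term of the form
\begin{equation*}
\esp\left[ \norme{J^{(u_k,j_k)}}^2 \frac{F_{z_{j_k}}(Z_{j_k})\left[1-F_{z_{j_k}}(Z_{j_k})\right]}{\left(\rho_{z_{j_k}}(Z_{j_k})\right)^2}\right].
\end{equation*}
Because $\norme{J^{(u_k,j_k)}}^2$ is deterministic by Step 1, linearity of expectation moves it outside. The weights $\binom{d_k-1}{|u_k|}^{-1}$ and $M_{1,\sim u_k}$ are unchanged, so summing over $u_k$ yields exactly the stated bound.

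\textbf{Step 3: finiteness, the main point to check.} The remaining expectation must be finite. With a standard Gaussian innovation $Z_{j_k}$, the Mills-ratio asymptotics give $F(1-F)/\rho^2 \sim 1/(|z|\rho(z))$ in the tails. It is therefore not obvious that the expectation, which carries an extra factor $\rho$, is finite, and I will verify it directly. The expectation equals $\int F(1-F)/\rho\,dz$, whose integrand is asymptotically $1/|z|^2$ in the tails, so it is integrable. The term is thus finite, and the bound is meaningful.

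Apart from Step 3, the argument is a direct specialization of the proof of Theorem \ref{theo:upbdst}, as already adapted in Corollary \ref{coro:upbdst2}.
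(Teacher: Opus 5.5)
Your Steps 1--2 are exactly the paper's argument. The paper simply calls the corollary straightforward: it follows from Corollary \ref{coro:upbdst2} by pulling the deterministic factor $\norme{J^{(u_k,j_k)}}^2$ out of each expectation, as in Corollary \ref{coro:upbdst1}. This already proves the stated inequality, and it needs no finiteness, because $\esp[cY]=c\,\esp[Y]$ holds in $[0,+\infty]$ for a constant $c\geq 0$ and $Y\geq 0$.

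\textbf{Step 3 is wrong.} An arithmetic slip reverses its conclusion.

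Your tail equivalent $1/(|z|\rho(z))$ is correct. Multiplying it by the density gives $F(1-F)/\rho\sim 1/|z|$, not $1/|z|^2$, and $1/|z|$ is not integrable at infinity.

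Concretely, take a standard Gaussian and $z\geq 1$. The classical Mills-ratio lower bound gives $1-\Phi(z)\geq \frac{z}{1+z^2}\phi(z)\geq \frac{\phi(z)}{2z}$. Together with $\Phi(z)\geq\frac12$, this yields $\Phi(z)(1-\Phi(z))/\phi(z)\geq \frac{1}{4z}$. Hence $\esp\left[\Phi(Z)(1-\Phi(Z))/\phi(Z)^2\right]=\int_{\R}\Phi(1-\Phi)/\phi\,dz=+\infty$. For $\mathcal{N}(\mu,\sigma^2)$ the value is just $\sigma^2$ times this.

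Choosing the innovation laws differently does not avoid this. Look at the summand $u_k=\emptyset$:
\begin{itemize}
\item there $Z_{j_k}=\X_{j_k}$, which is Gaussian by hypothesis;
\item $\norme{J^{(\emptyset,j_k)}}^2\geq 1$, since the entry for $\X_{j_k}$ itself is $1$;
\item $M_{1,\sim\emptyset}=\sum_{\ell\in\pi_k}\beta_\ell^2$.
\end{itemize}
So the right-hand side equals $+\infty$ whenever $\boldsymbol{\beta}_{\pi_k}\neq\bo{0}$. The corollary is therefore true but vacuous. You should drop the claim that the bound is finite and meaningful rather than try to prove it.
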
   
   
Such a result is straightforward. Since we are able to choose the distribution of $\Z_{j_k}$, taking $Z_{j_k} \stackrel{d}{=} \X_{j_k}$ yields 
$$ 
DS_{T_{j_k}}  \leq \frac{E_{j_k}}{2 d_k \Sigma} \sum_{\substack{u_k \subseteq  \pi_{k} \setminus \{j_k\}}}
 \binom{d_k-1}{|u_k|}^{-1}  M_{1,\sim u_k} \norme{J^{(u_k,j_k)}}^2 \, , 
$$ 
where $E_{j_k} := \esp\left[ \frac{F_{z_{j_k}}\left(Z_{j_k}\right) \left[1- F_{z_{j_k}}\left(Z_{j_k}\right) \right]}{\left(\rho_{z_{j_k}}\left(Z_{j_k}\right) \right)^2} \right]$. \\ 

Moreover, it is also interesting to have the upper-bouds of DSIs based on (dependent) gradients of functions with non-independent variables (\cite{lamboni23axioms,lamboni26axioms}).  To that end, consider the DM given by Equation (\ref{eq;depm}), and recall the definition of the first-type, total generalized sensitivity indices of $\Z_{j_k}$ for $r_{u_k, j_k}$ as follows (\cite{lamboni11,gamboa14}): 
\begin{eqnarray} 
GSI_{T_{u_k,j_k}} &:=& \frac{\trace\left(\var\left[ r_{u_k, j_k} \left(\XE_{u_k}, Z_{j_k}, \bo{Z}_{\bo{w}_{\sim p_{j_k}}} \right) -\esp_{\Z_{j_k}}\left[r_{u_k, j_k} \left(\XE_{u_k}, Z_{j_k}, \bo{Z}_{\bo{w}_{\sim p_{j_k}}} \right) \right]\right]  \right)}{\trace\left(\var\left[ r_{u_k, j_k} \left(\XE_{u_k}, Z_{j_k}, \bo{Z}_{\bo{w}_{\sim p_{j_k}}} \right)\right]  \right)} \nonumber\\
&=&  \frac{\esp\left[\norme{ r_{u_k, j_k} \left(\XE_{u_k}, Z_{j_k}, \bo{Z}_{\bo{w}_{\sim p_{j_k}}} \right) -r_{u_k, j_k} \left(\XE_{u_k}, Z_{j_k}', \bo{Z}_{\bo{w}_{\sim p_{j_k}}} \right) }^2 \right]}{2 \trace\left(\var\left[ r_{u_k, j_k} \left(\XE_{u_k}, Z_{j_k}, \bo{Z}_{\bo{w}_{\sim p_{j_k}}} \right)\right]  \right)} \, , \nonumber 
\end{eqnarray}
with $Z_{j_k}'$ being an i.i.d.copy of $Z_{j_k}$; $\trace(\cdot)$ being the trace function 
 Also, we use $grad (\M)$ for the dependent gradient of $\M$ (see \cite{lamboni23axioms,lamboni24axioms,lamboni26axioms} for more details).  
        
\begin{theorem}   \label{theo:upbdst2}
Let  $M_{1,k}^d$ be  the maximum, absolute value of the components of $grad (\M)$. If $\M$ and the dependency functions are (weakly) differentiable functions, then 
$$
DS_{T_{j_k}}  \leq \min \left\{ D\!U\!B_{j_k},\,  D\!U\!B_{j_k}' \right\} \, , 
$$
 with     
$$
D\!U\!B_{j_k}' :=  \frac{\left(M_{1,k}^d \right)^2}{ d_k \Sigma} \sum_{\substack{u_k \subseteq  \pi_{k} \setminus \{j_k\}}}
 \binom{d_k-1}{|u_k|}^{-1} \trace\left(\var\left[ r_{u_k, j_k} \left(\XE_{u_k}, Z_{j_k}, \bo{Z}_{\bo{w}_{\sim p_{j_k}}} \right)\right]  \right)  GSI_{T_{u_k,j_k}}   \, .   
$$         
\end{theorem}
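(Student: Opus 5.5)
Since Theorem \ref{theo:upbdst} already gives $DS_{T_{j_k}} \leq D\!U\!B_{j_k}$, it suffices to prove $DS_{T_{j_k}} \leq D\!U\!B_{j_k}'$; the minimum then follows. By Equation (\ref{eq;todsinew}), $DS_{T_{j_k}}$ is a weighted sum over $u_k \subseteq \pi_k\setminus\{j_k\}$ of $\sigma^{tot}_{u_k,j_k}$, with the same weights $\frac{1}{d_k\Sigma}\binom{d_k-1}{|u_k|}^{-1}$ that appear in $D\!U\!B_{j_k}'$. It is therefore enough to prove, for each fixed $u_k$,
$$
\sigma^{tot}_{u_k,j_k} \leq \left(M_{1,k}^d\right)^2 \trace\left(\var\left[ r_{u_k, j_k} \left(\XE_{u_k}, Z_{j_k}, \bo{Z}_{\bo{w}_{\sim p_{j_k}}} \right)\right]  \right)  GSI_{T_{u_k,j_k}} .
$$

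For fixed $u_k$, I first write the total variance in its Jansen form:
$$
\sigma^{tot}_{u_k,j_k}=\tfrac12\esp\left[\left(g_{u_k,j_k}(\ldots,Z_{j_k},\ldots)-g_{u_k,j_k}(\ldots,Z_{j_k}',\ldots)\right)^2\right],
$$
where $Z_{j_k}'$ is an i.i.d.\ copy of $Z_{j_k}$ and all other inputs are shared. In both evaluations, $\XE_{u_k}$ (hence $\bo{\X}_{u_k}$), $\bo{Z}_{\bo{w}_{\sim p_{j_k}}}$ and $\bo{\X}_{\sim\pi_k}$ coincide. The two outputs therefore differ only through $\bo{\X}_{\pi_k\setminus u_k}=r_{u_k,j_k}(\XE_{u_k},Z_{j_k},\cdot)$ versus $r_{u_k,j_k}(\XE_{u_k},Z_{j_k}',\cdot)$. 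Call these two points $\bo{x}$ and $\bo{x}'$; they differ only in the block $\pi_k\setminus u_k$.

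Next I bound $|\M(\bo{x})-\M(\bo{x}')|$ by a Lipschitz-type estimate along the segment (or the DM-induced path) joining the two points. This uses a mean-value / fundamental-theorem representation in which the increment is expressed through the dependent gradient $grad(\M)$ of \cite{lamboni23axioms,lamboni26axioms}. That gradient already accounts for the dependency functions, so that the chain rule through $r_{u_k,j_k}$ is absorbed into it. One gets
$$
|\M(\bo{x})-\M(\bo{x}')| \leq M_{1,k}^d\,\norme{r_{u_k,j_k}(\XE_{u_k},Z_{j_k},\cdot)-r_{u_k,j_k}(\XE_{u_k},Z_{j_k}',\cdot)}
$$
after a Cauchy--Schwarz step. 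Squaring, taking expectations and using the second expression of $GSI_{T_{u_k,j_k}}$ gives
$$
\tfrac12\esp\left[\norme{r-r'}^2\right]=\trace\left(\var[r]\right) GSI_{T_{u_k,j_k}} .
$$
This is exactly the per-subset inequality; summing with the weights concludes.

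The main obstacle is the Lipschitz step. A plain Cauchy--Schwarz applied to the componentwise bound $|\partial \M/\partial x_\ell|\le M$ gives $\|\nabla \M\|_2\le \sqrt{d_k-|u_k|}\,M$, which would introduce a factor $(d_k-|u_k|)$ as in $D\!U\!B_{j_k}$. To avoid it, I would rely on the definition and properties of the dependent gradient in \cite{lamboni23axioms,lamboni26axioms}. The hope is that, under the dependent metric, its Euclidean-type norm is controlled by $M_{1,k}^d$ directly. If that fails, the fallback is to state the bound with the factor $(d_k-|u_k|)$ absorbed into the constant. I would also check carefully that the path between $\bo{x}$ and $\bo{x}'$ stays in the support where $grad(\M)$ is bounded; weak differentiability suffices for the fundamental-theorem representation almost everywhere.
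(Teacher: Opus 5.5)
Your outline is the paper's proof of this theorem step for step. It uses the Jansen form of $\sigma^{tot}_{u_k,j_k}$, a Lipschitz-type bound $|\M(\bo{x})-\M(\bo{x}')|\leq M^d_{1,k}\norme{\bo{x}-\bo{x}'}$, the identity $\frac{1}{2}\esp\norme{r-r'}^2=\trace\left(\var[r]\right)GSI_{T_{u_k,j_k}}$, and the weighted sum over $u_k$. The step you single out as the obstacle is exactly the one the paper only asserts, by pointing to Taylor expansions for non-independent variables. It is a genuine gap, not a detail left to check. A bound $M$ on the largest component of a vector $a$ only gives $|a^{\T} v|\leq M\|v\|_1\leq M\sqrt{d_k-|u_k|}\,\norme{v}$ for increments $v$ supported on $\pi_k\setminus u_k$, and the square root cannot be removed in general. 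Switching to the dependent gradient does not help either. The Jansen difference compares two points of $\R^d$ that differ by exactly $r-r'$, so the dependence has already been propagated, and what gets paired with $r-r'$ along the segment is $\nabla\M$ itself.

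A concrete check shows this. Take $\M(\bo{x})=x_1+x_2$ with $(X_1,X_2)$ standard bivariate Gaussian with correlation $\rho$, and $j_k=1$, so $\Sigma=2+2\rho$.
For $u_k=\emptyset$, the block moves by $(1,\rho)(X_1-X_1')$. This gives $\sigma^{tot}_{\emptyset,1}=(1+\rho)^2$ and $\frac{1}{2}\esp\norme{r-r'}^2\leq 1+\rho^2$.
For $u_k=\{2\}$, it moves by $(\sqrt{1-\rho^2}\,(Z_1-Z_1'),0)$. This gives $\sigma^{tot}_{\{2\},1}=\frac{1}{2}\esp\norme{r-r'}^2=1-\rho^2$.
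Hence $DS_{T_1}=1/2$, whereas $D\!U\!B'_1\leq (M^d_{1,k})^2/(2+2\rho)$, which is below $1/2$ as soon as $(M^d_{1,k})^2<1+\rho$. Here the dependent Jacobian $J^d$ has columns $(1,\rho)$ and $(\rho,1)$. The condition $(M^d_{1,k})^2<1+\rho$ then holds:
for every $\rho>0$ if the maximal component is taken from the ordinary gradient $(1,1)$;
for every $\rho>0$ if it is taken from $(J^d)^{-1}\nabla\M=(1,1)/(1+\rho)$;
for every $\rho<0$ if it is taken from the dependent partial derivatives $(J^d)^{\T}\nabla\M=(1+\rho)(1,1)$.

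So the per-subset inequality you are after does not follow from a componentwise bound, and neither your argument nor the paper's establishes the statement as written. Your route does prove two different statements:
your fallback, $\sigma^{tot}_{u_k,j_k}\leq (d_k-|u_k|)\,M_{1,k}^2\,\trace\left(\var[r]\right)GSI_{T_{u_k,j_k}}$, with $M_{1,k}$ bounding the ordinary partial derivatives;
the stated form with $M^d_{1,k}$ replaced by a bound on $\norme{\nabla_{\pi_k\setminus u_k}\M}$ over the convex hull of the support, via the mean-value theorem on the segment from $\bo{x}'$ to $\bo{x}$ followed by Cauchy--Schwarz.
Neither uses the dependent gradient, and neither is the theorem as stated.
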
      
\begin{preuve}
See Appendix \ref{app:theo:upbdst2}.   
\hfill $\square$     
\end{preuve} 
 
Note that only $M_{1,k}^d$ requires model evaluations in the expression of $D\!U\!B_{j_k}'$.
             
\section{An analytical test case} \label{sec:test} 
Consider the linear function with three inputs given by      
\begin{equation} \label{eq:line}         
    \M(\bo{\X}) = \X_1 + \X_2 +  \X_3  \, ,     \nonumber    
	\quad
	\bo{\X} \sim \mathcal{N}\left(\bo{0},\, \left[\begin{array}{ccc} 
\sigma_1^2 &\rho_{12} \sigma_1\sigma_2 & \rho_{13}\sigma_1\sigma_3 \\ 
\rho_{12}\sigma_1\sigma_2 & \sigma_2^2 & \rho_{23}\sigma_2\sigma_3  \\
\rho_{13}\sigma_1\sigma_3 & \rho_{23}\sigma_2\sigma_3  & \sigma_{3}^2    
\end{array}\right]\right) \, . 
\end{equation}   
 One of the dependency models is given by (\cite{lamboni21,lamboni25math})
\begin{eqnarray}
 \left\{\begin{array}{ccl}  
    X_2 &=& \frac{\rho_{12}\sigma_2}{\sigma_1} X_1 + \sqrt{1-\rho_{12}^2}  Z_2   \\
		X_3 &=& \frac{\rho_{13}\sigma_3}{\sigma_1} X_1 + \frac{\sigma_3(\rho_{23}-\rho_{12}\rho_{13})}{\sigma_2 \sqrt{1-\rho_{12}^2}} Z_2 + \sqrt{\frac{1-\rho_{12}^2-\rho_{13}^2-\rho_{23}^2+2\rho_{12}\rho_{13}\rho_{23}}{1-\rho_{12}^2}} Z_3
      \end{array} \right.   \, ,   \nonumber                         
\end{eqnarray}     
where $Z_j \sim \mathcal{N}\left(0, \sigma_j^2 \right)$, $j=2,\,3$ and $Z_2, \, Z_3,\, X_1$ are independent. The DSIs are derived in \cite{lamboni26uq}, and such indices are equal to the Shapley effects.\\    

The above DM is associated with $\bo{w} =(1, 2, 3)$, leading to $p_1=1$ and $u_1 =\emptyset$ for the input $\X_1$. Concerning $\X_2$, one can see that $p_2=2$ and $u_2 =\{1\}$, while $p_3=3$ and $u_3 =\{1, 2\}$ for the input $\X_3$. Thus,  
$ 
J^{(\emptyset, 1)} = \left[1, \, \frac{\rho_{12}\sigma_2}{\sigma_1}, \,  \frac{\rho_{13}\sigma_3}{\sigma_1} \right]^\T       
$ for $\X_1$,   
$
J^{(\{1\}, 2)} = \left[0, \, \sqrt{1-\rho_{12}^2}, \,   \frac{\sigma_3(\rho_{23}-\rho_{12}\rho_{13})}{\sigma_2 \sqrt{1-\rho_{12}^2}}  \right]^\T$ for $\X_2$ and 
$
J^{(\{1,2\}, 3)} = \left[0, \, 0, \, \sqrt{\frac{1-\rho_{12}^2-\rho_{13}^2-\rho_{23}^2+2\rho_{12}\rho_{13}\rho_{23}}{1-\rho_{12}^2}}  \right]^\T     
$ for $\X_3$.   All of the values of $J^{(u_j, j)}$ are provided in Appendix \ref{app:Jval}. 
As $d=3$ and $M_{1} =1$,  the upper-bound of $DST_{j}$ is     
$$  
D\!U\!B_{j} :=  \frac{E_{j}}{6 \Sigma} \sum_{\substack{u_j \subseteq  \{1, 2, 3\} \setminus \{j\}}}
 \binom{d-1}{|u_j|}^{-1} (d- |u_j|)  \norme{J^{(u_j,j)}}^2   \, ;     
$$   
leading to the following upper-bounds of $\X_1$, $\X_2$ and $\X_3$, respectively,
$$
D\!U\!B_{1} :=  \frac{E_1}{6 \Sigma} \left( 3 \norme{J^{(\emptyset,1)}}^2 
+  \norme{J^{(\{2\},1)}}^2 +  \norme{J^{(\{3\},1)}}^2 + \norme{J^{(\{23\},1)}}^2 \right) \, ,   
$$   
$$  
D\!U\!B_{2} :=  \frac{E_2}{6 \Sigma} \left( 3 \norme{J^{(\emptyset,2)}}^2 
+  \norme{J^{(\{1\},2)}}^2 +  \norme{J^{(\{3\},2)}}^2 + \norme{J^{(\{13\},2)}}^2 \right) \, ,
$$  
$$
D\!U\!B_{3} :=  \frac{E_3}{6 \Sigma} \left( 3 \norme{J^{(\emptyset,3)}}^2 
+  \norme{J^{(\{1\},3)}}^2 +  \norme{J^{(\{2\},3)}}^2 + \norme{J^{(\{12\},3)}}^2 \right) \, .  
$$    
       
 Figure \ref{fig:gsi1} depicts the DSIs and their upper-bounds using $\sigma_1^2 =2, \, \sigma_2^2=\sigma_3^2=8$ and the sets of correlations listed in Table \ref{tab:text1}.         
\begin{table}[ht]        
\centering
\begin{tabular}{lccc c lccc}
  \hline  
  \hline
  & $\rho_{12}$ & $\rho_{13}$ & $\rho_{23}$ & &  & $\rho_{12}$ & $\rho_{13}$ & $\rho_{23}$ \\ 
  \hline
 $C_1$ & 1 & 1 & 1     & &  $C_6$ & 0.000 & 0.600 & 0.000  \\ 
 $C_2$ & 0.250 & 0.500 & 0.750       & &  $C_7$ & 0.000 & 0.000 & 0.000\\  
 $C_3$ &  0.010 & 0.000 & 0.750      & &  $C_8$ & 0.250 & 0.800 & 0.500 \\ 
 $C_4$ & 0.500 & 0.500 & 0.500       & &  $C_9$ & 0.000 & 0.750 & 0.450 \\ 
 $C_5$ & -0.500 & 0.500 & -0.500     & &  $C_{10}$ & -0.250 & 0.250 & 0.250 \\ 
   \hline
	 \hline 
\end{tabular}
\caption{Ten sets of the correlation values.}   
\label{tab:text1}    
\end{table}
 \begin{figure}[!hbp]   
\begin{center}  
\includegraphics[height=8cm,width=16cm,angle=0]{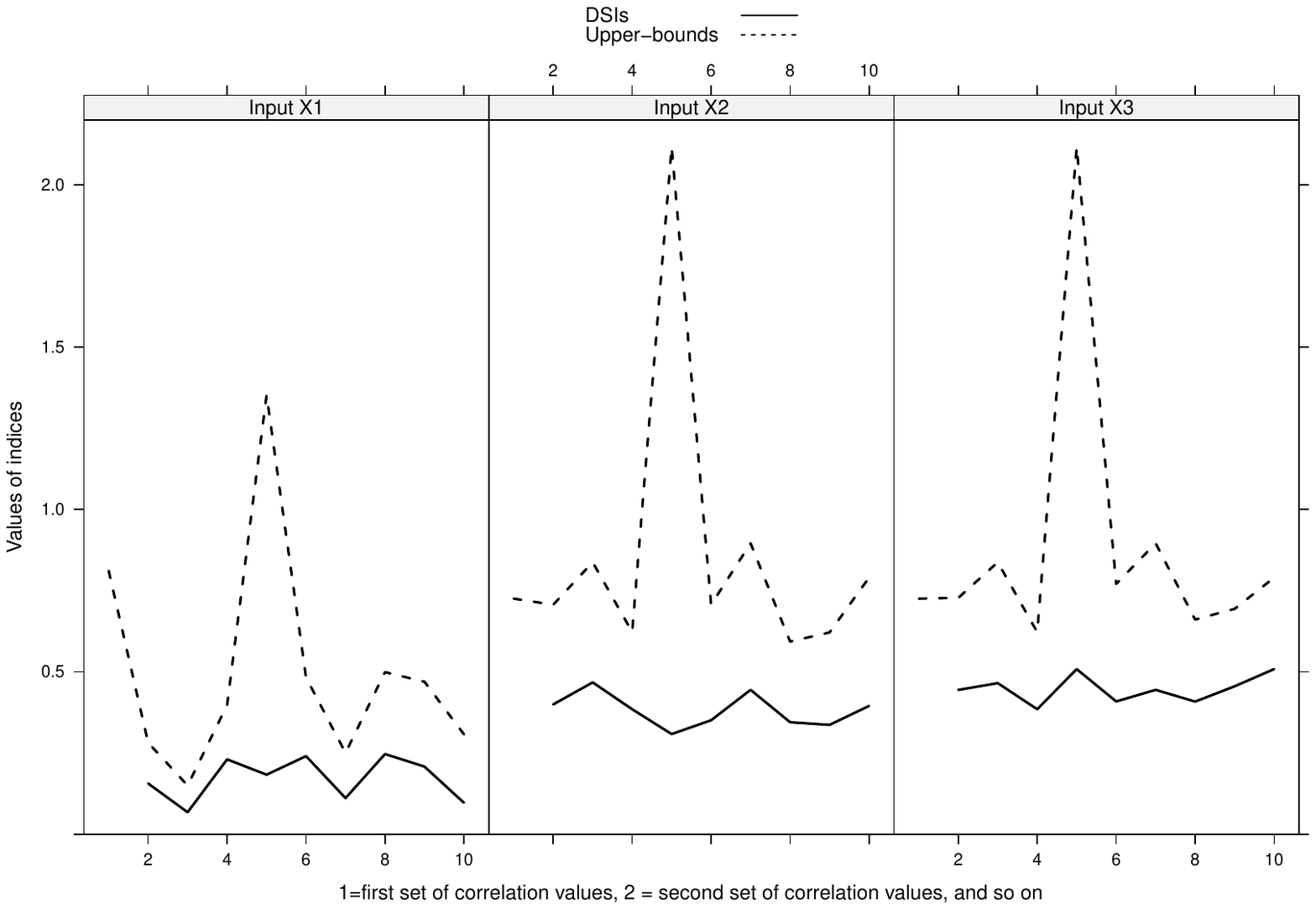}
\end{center} 
\caption{DSIs and their upper-bounds.}       
 \label{fig:gsi1}             
\end{figure}     
   
It comes out from Figure \ref{fig:gsi1} that $D\!U\!B_{j}$s are upper-bounds of DSIs (as expected). Moreover, for some sets of correlations, the values of upper-bounds are close to those of DSIs. Consequently, such upper-bounds can serve as screening measures for models with non-independent variables.\\

It is worth noting that in the presence of perfect correlations ($\rho_{12}=1$),  one obtains a degenerate case, and the following DM must be considered to avoid infinity values: 
$$
\X_{2}  = \frac{\rho_{12}\sigma_2}{\sigma_1} X_1,      
\qquad    \quad     
\X_{3}  = \frac{\rho_{13}\sigma_3}{\sigma_1} X_1 + \sqrt{1-\rho_{13}^2} Z_3 \, , 
$$      
meaning that the coefficient of $Z_2$ vanishes. Additionally, if $\rho_{13}=1$, then one has 
$$
\X_{2}  = \frac{\rho_{12}\sigma_2}{\sigma_1} X_1, 
\qquad    \quad         
\X_{3}  = \frac{\rho_{13}\sigma_3}{\sigma_1} X_1\, .  
$$    
                                                   
\section{Conclusion} \label{sec:con}   
Firstly, we have shown that Shapley effects of non-independent variables range between DSIs, that is, $DS_j \leq Sh_j \leq DS_{T_j}$ for every model evaluated at non-independent variables.  The equality holds for linear models evaluated at a Gaussian random vector, yielding sharp inequalities between Shapley effects and DSIs of non-independent variables.  For independent variables, DSIs boil down to Sobol' indices, and sharp inequalities hold between Shapley effects and Sobol' indices as well (\cite{owen17}). 
 Regarding the computations of both types of indices, it appears that DSIs require a number of model runs that is less than that of Shapley effects. Thus, the total DSIs can also be used for screening  Shapley effects.\\ 
    
Secondly, we have provided the upper-bouds of dependent sensitivity indices that require fewer model evaluations compared to the computations of DSIs. Such upper-bounds serve as new screening measures for models with non-independent variables, and they can help sometimes for obtaining practical estimates of the total DSIs.  While an analytical test case is used to show the effectiveness of the proposed upper-bounds, the corresponding screening measures are going to be more efficient for linear models and for models with small values of the traditional gradients or dependent gradients. Despite such upper-bounds are valid for every smooth model, more investigations are needed so as to come out with a sharp inequality between the total DSI and its upper-bound. 
                             
 \section*{Acknowledgements}   
 We would like to thank the two referees for their comments and suggestions that have helped improving this paper.   
      
\begin{appendices}            
\section{Proof of Theorem \ref{theo:ubone}} \label{app:theo:ubone}
As the ER $g\left(\X_{w_{1}}, \bo{Z}_{\sim w_{1}}\right)$ associated with $\bo{w}$ includes only independent variables,  the Hoeffding decomposition is given as follows:      
$$
\Sigma = \sum_{\substack{r \subseteq  \pi_{2} =\{w_1, \boldsymbol{\varpi}, j , \bo{w}_{\sim p_j} \}\\ |r| > 0}} \Sigma_{r} \, ,   
$$
and we can check that  
$$
A_1 := \var\left\{ \esp\left[ g\left(\X_{w_{1}}, \bo{Z}_{\sim w_{1}}\right) | \X_{w_{1}}, \bo{Z}_{\boldsymbol{\varpi}} \right] \right\} 
= 
\sum_{\substack{ r \subseteq \{w_1, \boldsymbol{\varpi} \} \\ |r| > 0 }} \Sigma_{r}  \, ,    
$$  
$$ 
A_2 :=   \var\left\{ \esp\left[ g\left(\X_{w_{1}}, \bo{Z}_{\sim w_{1}}\right) | \X_{w_{1}}, \bo{Z}_{\boldsymbol{\varpi}},  Z_{j}  \right] \right\}  
= \sum_{\substack{ r \subseteq \{w_1, \boldsymbol{\varpi}, j \} \\ |r| > 0 }} \Sigma_{r}
  \, .     
$$        
leading to     
$$
\Sigma_j \leq  
A_2 - A_1 = \sum_{\substack{r \subseteq \{w_1, \boldsymbol{\varpi} \} }} \Sigma_{r \cup \{j\}} = \sum_{\substack{r \subseteq \{w_1, \boldsymbol{\varpi}, j \} \\ j \in r }} \Sigma_{r} \leq  \sum_{\substack{r \subseteq \pi_2 \\ j \in r }} \Sigma_{r}
   \, .        
$$    
The result holds because $\Sigma_j =\sigma_{u, j}^{fo}$ and $\sigma_{u, j}^{tot} =\sum_{\substack{r \subseteq \pi_2 \\ j \in r }} \Sigma_{r}$.

\section{Proof of Theorem \ref{theo:upBgen}} \label{app:theo:upBgen}
Firstly, consider the ER given by Equation (\ref{eq:erkstar}), that is, 
$$
 g\left(\bo{\X}_{\pi_1}, \XE_{u_{2}}, \ldots, \XE_{u_{K}}, Z_j, \bo{Z}_{\bo{w}_{\sim p_{j}}}, \, \bo{Z}_{\sim \boldsymbol{\varpi}_{k^*}}  \right) \stackrel{d}{=}  \M(\bo{\X}) \, .  
$$         
For the sake of simplicity, let $\XE_{u_1} :=\bo{\X}_{u_1}$, $\XE_{u_{K+1}} := Z_j$ and $\XE_{u_{K+2}} :=\left\{\bo{Z}_{\bo{w}_{\sim p_{j}}}, \, \bo{Z}_{\sim \boldsymbol{\varpi}_{k^*}} \right\}$ be a random set containing the other inputs of $g(\cdot)$. By using $\bo{\XE} :=(\XE_{u_1}, \ldots, \XE_{u_{K+2}})$, the Hoeffding decomposition of $g(\bo{\XE})$ is given by    
$$ 
g(\bo{\XE}) = \esp\left[g(\bo{\XE})\right] +  \sum_{\substack{v \subseteq \left\{\{u_1\}, \ldots, \{u_{K+2}\} \right\}\\ |v|>0}} g_{v}\left( \bo{\XE}_v \right);     
\qquad   
\Sigma =  \sum_{\substack{v \subseteq \left\{\{u_1\}, \ldots, \{u_{K+2}\} \right\}\\ |v|>0}} \Sigma_{v}  \, .      
$$  
By analogy to $v \subseteq \{1, \ldots, d\}$, it is to be noted that  
$$\
v \subseteq \{ \{u_{1}\}, \ldots, \{u_{K}\} \} = v \in \left\{ \{u_{1}\}, \ldots, \{u_{K}\}, \{u_{1}, u_{2} \}, \ldots, \{u_{K-1}, u_{K} \}, \ldots, \{u_{1}, \ldots, u_{K} \}\right\} \, .   
$$             
We can then check that      
$$   
B_1 := \var\left\{\esp\left[ g\left(\bo{\X}_{\pi_1},  \bo{\X}_{\Select}, \bo{Z} \right) | \bo{\X}_{u_1}, \XE_{u_2}, \ldots, \XE_{u_K}, Z_j \right] \right\}  =  \sum_{\substack{v \subseteq \left\{\{u_1\}, \ldots, \{u_{K+1}\} \right\}\\ |v| >0}} \Sigma_{v}  \, ,   
$$ 
$$
B_2 := \var\left\{\esp\left[ g\left(\bo{\X}_{\pi_1},  \bo{\X}_{\Select}, \bo{Z} \right) | \bo{\X}_{u_1}, \XE_{u_2}, \ldots, \XE_{u_K} \right] \right\}  =  \sum_{\substack{v \subseteq \left\{\{u_1\}, \ldots, \{u_{K}\} \right\}\\ |v|>0}} \Sigma_{v} \, ,
$$ 
leading to 
$$
B_1-B_2 = \sum_{\substack{v \subseteq \left\{\{u_1\}, \ldots, \{u_{K}\} \right\}}} \Sigma_{v \cup \{u_{K+1}\}} \, .  
$$         
Secondly, we have 
$$
\sigma_{u_{k^*},j}^{fo} 
  \leq 
\sum_{\substack{v \subseteq \left\{\{u_1\}, \ldots, \{u_{K}\} \right\}}} \Sigma_{v \cup \{u_{K+1}\}}   \leq \sigma_{u_{k^*},j}^{tot}   \, ,       
$$
because    
$$
\sigma_{u_{k^*},j}^{fo} =  \var\left\{\esp\left[g\left(\bo{\XE} \right) \,| Z_j \right] \right\}  = \var\left\{\esp\left[g\left(\bo{\XE} \right) \,| \XE_{u_{K+1}} \right] \right\} = \Sigma_{u_{K+1}} \, ,   
$$
and     
$$
\sigma_{u_{k^*},j}^{tot} =  \sum_{\substack{v \subseteq \left\{\{u_1\}, \ldots, \{u_{K}\}, \{u_{K+2}\} \right\}}} \Sigma_{v \cup \{u_{K+1}\}} \, .      
$$ 
Therefore, the following inequalities hold:   
\begin{equation}  
Sh_{j} \geq  \frac{1}{d \Sigma} \sum_{\substack{u_k \subseteq \pi_{k}\\ \forall\,  k \neq k^*}}
 \sum_{\substack{u_{k^*} \subseteq \left(\pi_{k^*} \setminus \{j\}  \right)}}   
\binom{d-1}{|u_1|+ \ldots + |u_K|}^{-1} \sigma_{u_{k^*},j}^{fo}  \, ,  \nonumber 
\end{equation}
\begin{equation}   
Sh_{j} \leq  \frac{1}{d \Sigma} \sum_{\substack{u_k \subseteq \pi_{k}\\ \forall\,  k \neq k^*}}
 \sum_{\substack{u_{k^*} \subseteq \left(\pi_{k^*} \setminus \{j\}  \right)}}   
\binom{d-1}{|u_1|+ \ldots + |u_K|}^{-1} \sigma_{u_{k^*},j}^{tot}  \, .    \nonumber    
\end{equation}    
    
Using $A_{k^*} := \sum_{\substack{u_k \subseteq \pi_{k}\\ \forall\,  k \neq k^*}} \binom{d-1}{|u_1|+ \ldots + |u_K|}^{-1}$ leads to
\begin{equation}  \label{eq:ubshj}  
\frac{1}{d \Sigma} \sum_{\substack{u_{k^*} \subseteq \left(\pi_{k^*} \setminus \{j\}  \right)}} A_{k^*} \, \sigma_{u_{k^*},j}^{fo} 
\leq    
Sh_{j} 
\leq 
\frac{1}{d \Sigma} \sum_{\substack{u_{k^*} \subseteq \left(\pi_{k^*} \setminus \{j\}  \right)}} A_{k^*} \, \sigma_{u_{k^*},j}^{tot}  \, . 
\end{equation}  
Thirdly, consider $\Gamma_{k^*} := \{0, \ldots, d_{k^*}-1 \}$;  $\Gamma_k := \{0, \ldots, d_k \}$ for any $k  \in \{1, \ldots, K\} \setminus\{k^*\}$ ; $\Gamma := \bigtimes_{k=1}^K \Gamma_k$,  $\bo{q} :=(q_1, \ldots, q_K) \in \Gamma$, $\bo{q}_{\sim k^*} :=(q_1, \ldots,  q_{k^* -1}, q_{k^* +1} \ldots, q_K)$ and $||\bo{q}||_1 := q_1+ \ldots+q_K$. 
Keeping in mind the Vandermonte convolution of binomial coefficients, one can write  
\begin{eqnarray}         
A_{k^*}  &=& \sum_{u_1 \subseteq \pi_{1}} \ldots \sum_{u_{k^*-1} \subseteq \pi_{{k^*-1}}}
\sum_{u_{k^*+1} \subseteq \pi_{{k^*+1}}} \ldots
\sum_{u_{K} \subseteq \pi_{{K}}} \binom{d-1}{|u_1|+ \ldots + |u_K|}^{-1}   \nonumber \\
&=&  \sum_{q_1 \in \Gamma_1} \binom{d_1}{q_1} \ldots \sum_{q_{k^*-1} \in \Gamma_{k^*-1}}
\binom{d_{k^*-1}}{q_{k^*-1}}
\sum_{q_{k^*+1} \in \Gamma_{k^*+1}} \binom{d_{k^*+1}}{q_{k^*+1}}  \ldots \sum_{q_{K} \in \Gamma_{K}}  
\binom{d_{K}}{q_{K}}  \nonumber \\    
& & \times  \binom{d-1}{|u_{k^*}| + ||\bo{q}_{\sim k^*}||_1 }^{-1}    \nonumber \\     
&=& \sum_{\substack{q_k \in \Gamma_k \\ \forall\,  k \neq k^*}} \prod_{\substack{k=1 \\ k\neq k^*}}^K  \binom{d_k}{q_k}  \binom{d-1}{|u_{k^*}| +  ||\bo{q}_{\sim k^*}||_1}^{-1}  
= \sum_{m=0}^{d-d_{k*}}   \sum_{\substack{q_k \in \Gamma_k \\ \forall\,  k \neq k^*\\ \substack{||\bo{q}_{\sim k^*}||_1 =m}}} \prod_{\substack{k=1 \\  k\neq k^*}}^K  \binom{d_k}{q_k} \binom{d-1}{|u_{k^*}| + m}^{-1}       
\nonumber \\  
& =&  \sum_{m=0}^{d-d_{k*}} \binom{d-d_{k^*}}{m}  \binom{d-1}{|u_{k^*}| + m}^{-1}  \nonumber \\          
& = &  \binom{d_{k^*}-1}{|u_{k^*}|}^{-1}     \sum_{m=0}^{d-d_{k*}} \binom{d-d_{k^*}}{m}  \binom{d_{k^*}-1}{|u_{k^*}|}  \binom{d-1}{|u_{k^*}| + m}^{-1} =:  \binom{d_{k^*}-1}{|u_{k^*}|}^{-1} A_{4}   \nonumber   \, ,                     
\end{eqnarray}    	 	   
where  
$
A_{4} :=    \sum_{m=0}^{d-d_{k*}} \binom{d-d_{k^*}}{m}  \binom{d_{k^*}-1}{|u_{k^*}|}  \binom{d-1}{|u_{k^*}| + m}^{-1} \, .  
$.
By expressing the binomial coefficients involved in $A_{4}$, and by re-organizing the factorial terms, one can write
\begin{eqnarray} 
A_{4} &=&  \sum_{m=0}^{d-d_{k*}} \binom{d-1 - |u_{k^*}|-m}{d_{k^*}-1 -|u_{k^*}|}  \binom{|u_{k^*}| +m}{|u_{k^*}|}  \binom{d-1}{d_{k^*}-1}^{-1} \nonumber \\   
&=& \binom{d-1}{d_{k^*}-1}^{-1} \; \sum_{k=|u_{k^*}|}^{d-d_{k* + |u_{k^*}|}} \binom{d-1 -k}{d_{k^*}-1 -|u_{k^*}|}  \binom{k}{|u_{k^*}|}   \nonumber \\  
&=&   \binom{d-1}{d_{k^*}-1}^{-1}  \binom{d}{d_{k^*}}  =  \frac{d}{d_{k^*}} \, , \nonumber 
\end{eqnarray} 
using a kind of the generalized Hockey-Stick identity.\\      
Finally, combining all these elements yields  
$
A_{k^*}  = \binom{d_{k^*}-1}{|u_{k^*}|}^{-1} \frac{d}{d_{k^*}}
$,   
and  Equation (\ref{eq:ubshj}) becomes 
$$
  \frac{1}{d_{k^*} \Sigma} 
 \sum_{\substack{u_{k^*} \subseteq \left(\pi_{k^*} \setminus \{j\}  \right)}}  
\binom{d_{k^*}-1}{|u_{k^*}|}^{-1}  
 \sigma_{u_{k^*},j}^{fo}  = DS_j   
\leq
 Sh_{j} 
 \leq  DS_{T_j} \, . 
$$

\section{Proof of Theorem \ref{theo:upbdst}} \label{app:theo:upbdst}
Using the ER of $\M(\bo{\X})$, that is,     
$$  
g_{u_k,j_k}\left(\X_{w_{1,k}}, \bo{Z}_{\sim w_{1,k}} , \bo{\X}_{\sim \pi_k} \right) := 
\M\left(\X_{w_{1,k}}, r_{u_k, j_k} \left(\XE_{u_k}, Z_{j_k}, \bo{Z}_{\bo{w}_{\sim p_{j_k}}} \right), \, \bo{\X}_{\sim \pi_k}  \right) \, ,      
$$   
we can check that  (\cite{lamboni21,lamboni23axioms})       
$$ 
\frac{\partial g_{u_k,j_k}}{\partial z_{j_k}}\left(\X_{w_{1,k}}, \bo{Z}_{\sim w_{1,k}} , \bo{\X}_{\sim \pi_k} \right) = \left( J^{(u_k,j_k)} \right)^\T \nabla \M\left(\X_{w_{1,k}}, r_{u_k, j_k} \left(\XE_{u_k}, Z_{j_k}, \bo{Z}_{\bo{w}_{\sim p_{j_k}}} \right), \, \bo{\X}_{\sim \pi_k}  \right)  \,   .   
$$     
     
Using the inequality $\sigma_{u_k, j_k}^{tot}  \leq   \frac{1}{2} \esp\left[ \left(\frac{\partial g}{\partial z_{j_k}} \right)^2 \frac{F_{z_{j_k}}\left(Z_{j_k}\right) \left[1- F_{z_{j_k}}\left(Z_{j_k}\right) \right]}{\left(\rho_{z_{j_k}}\left(Z_{j_k}\right) \right)^2} \right]$ shown in \cite{lamboni22b}, one can write  
\begin{eqnarray} 
\sigma_{u_k, j_k}^{tot} & \leq & \frac{1}{2}   \esp\left[ \left(\norme{J^{(u_k,j_k)}} \norme{\nabla_{\sim u_k} \M} \right)^2 \frac{F_{z_{j_k}}\left(Z_{j_k}\right) \left[1- F_{z_{j_k}}\left(Z_{j_k}\right) \right]}{\left(\rho_{z_{j_k}}\left(Z_{j_k}\right) \right)^2} \right] \nonumber \\ 
 & \leq &  \frac{(d_k- |u_k|) M_{1,k}^2 }{2} \,   
 \esp\left[ \norme{J^{(u_k,j_k)}}^2 \frac{F_{z_{j_k}}\left(Z_{j_k}\right) \left[1- F_{z_{j_k}}\left(Z_{j_k}\right) \right]}{\left(\rho_{z_{j_k}}\left(Z_{j_k}\right) \right)^2} \right]  \, ,   \nonumber  
\end{eqnarray}  
and the result holds.

\section{Proof of Theorem \ref{theo:upbdst2}} \label{app:theo:upbdst2}
Firstly, using $Z_{j_k}'$ for an i.i.d. copy of $Z_{j_k}$, and keeping in mind that 
$$  
g_{u_k,j_k}\left(\X_{w_{1,k}}, \bo{Z}_{\sim w_{1,k}} , \bo{\X}_{\sim \pi_k} \right) := 
\M\left(\X_{w_{1,k}}, r_{u_k, j_k} \left(\XE_{u_k}, Z_{j_k}, \bo{Z}_{\bo{w}_{\sim p_{j_k}}} \right), \, \bo{\X}_{\sim \pi_k}  \right) \, ,     
$$   
the total-effect variance of $\Z_{j_k}$ is given by      
\begin{eqnarray}
 \sigma_{u_k, j_k}^{tot}  
	&=& \frac{1}{2}   \esp\left\{ \left[\M\left(\X_{w_{1,k}}, r_{u_k, j_k} \left(\XE_{u_k}, Z_{j_k}, \bo{Z}_{\bo{w}_{\sim p_{j_k}}} \right), \, \bo{\X}_{\sim \pi_k}  \right)  \right. \right.  \nonumber \\
& & \left. \left. \hfill 
-   
\M\left(\X_{w_{1,k}}, r_{u_k, j_k} \left(\XE_{u_k}, Z_{j_k}', \bo{Z}_{\bo{w}_{\sim p_{j_k}}} \right), \, \bo{\X}_{\sim \pi_k}  \right)
\right]^2 \right\} \nonumber \\  
 & \leq & \left(M_{1,k}^d\right)^2 \; \frac{1}{2} 
 \esp\left\{\left[ \norme{ r_{u_k, j_k} \left(\XE_{u_k}, Z_{j_k}, \bo{Z}_{\bo{w}_{\sim p_{j_k}}} \right)
-   
 r_{u_k, j_k} \left(\XE_{u_k}, Z_{j_k}', \bo{Z}_{\bo{w}_{\sim p_{j_k}}} \right)}^2 \right] \right\}  \nonumber \\
&=&  \left(M_{1,k}^d\right)^2 \trace\left(\var\left[ r_{u_k, j_k} \left(\XE_{u_k}, Z_{j_k}, \bo{Z}_{\bo{w}_{\sim p_{j_k}}} \right)\right]  \right)  GSI_{T_{u_k,j_k}} \, , \nonumber 
\end{eqnarray} 	         
keeping in mind the Taylor expansion of functions with non-independent variables (see \cite{sommer20,lamboni23axioms}).

\section{Derivatives of dependency models used in Section \ref{sec:test}} \label{app:Jval}
  
Likewise, we can check that 
$
J^{(\{1\}, 3)} = \left[0, \,   \frac{\sigma_2(\rho_{23}-\rho_{12}\rho_{13})}{\sigma_3 \sqrt{1-\rho_{13}^2}}, \, \sqrt{1-\rho_{13}^2} \right]^\T$;    
 $ 
J^{(\emptyset, 2)} = \left[ \frac{\rho_{12}\sigma_1}{\sigma_2}, \, 1, \,  \frac{\rho_{23}\sigma_3}{\sigma_2} \right]^\T    
$;      
$
J^{(\{2\}, 1)} = \left[\sqrt{1-\rho_{12}^2}, \, 0, \,  \frac{\sigma_3(\rho_{13}-\rho_{12}\rho_{23})}{\sigma_1 \sqrt{1-\rho_{12}^2}}  \right]^\T$;

$
J^{(\{2\}, 3)} = \left[ \frac{\sigma_1(\rho_{13}-\rho_{12}\rho_{23})}{\sigma_3 \sqrt{1-\rho_{23}^2}} , \, 0, \,   \sqrt{1-\rho_{23}^2} \right]^\T$; 
$
J^{(\{2, 3\}, 1)} = \left[ \sqrt{\frac{1-\rho_{12}^2-\rho_{13}^2-\rho_{23}^2+2\rho_{12}\rho_{13}\rho_{23}}{1-\rho_{23}^2}}, \, 0, \, 0  \right]^\T
$.
Finally, we can see that
$ 
J^{(\emptyset, 3)} = \left[ \frac{\rho_{13}\sigma_1}{\sigma_3}, \,  \frac{\rho_{23}\sigma_2}{\sigma_3} , \, 1 \right]^\T      
$;  
$
J^{(\{3\}, 2)} = \left[ \frac{\sigma_1(\rho_{12}-\rho_{13}\rho_{23})}{\sigma_2 \sqrt{1-\rho_{23}^2}}, \,   \sqrt{1-\rho_{23}^2}, \, 0    \right]^\T       
$;
$
J^{(\{3\}, 1)} = \left[\sqrt{1-\rho_{13}^2}, \,   \frac{\sigma_2(\rho_{12}-\rho_{13}\rho_{23})}{\sigma_1 \sqrt{1-\rho_{13}^2}}, \, 0    \right]^\T     
$ 
and    
$
J^{(\{1, 3\}, 2)} = \left[0, \, 0 , \,  \sqrt{\frac{1-\rho_{12}^2-\rho_{13}^2-\rho_{23}^2+2\rho_{12}\rho_{13}\rho_{13}}{1-\rho_{13}^2}}  \right]^\T      
$. 
      
\end{appendices}    
   \bibliographystyle{elsarticle-num}           
                      
                                 
\end{document}